\documentclass[a4papersize]{article}

\usepackage{amsmath,amsthm, amsxtra,amssymb,latexsym, amscd}
\usepackage[mathscr]{eucal}
\newtheorem{theorem}{Theorem}[section]
\newtheorem{corollary}[theorem]{Corollary}
\newtheorem{lemma}[theorem]{Lemma}

\newtheorem{example}[theorem]{Example}

\newtheorem{remark}[theorem]{Remark}

\DeclareMathOperator{\depth}{depth}

\DeclareMathOperator{\Hom}{Hom}
\DeclareMathOperator{\Ann}{Ann}

\DeclareMathOperator{\Ass}{Ass}
\DeclareMathOperator{\Supp}{Supp}
\DeclareMathOperator{\Assh}{Assh}
\DeclareMathOperator{\Var}{Var}

\DeclareMathOperator{\Att}{Att}

\DeclareMathOperator{\Spec}{Spec}

\begin{document}
	\large
	\centerline{\Large {\bf ON THE TRANSFER OF ARTINIANESS AND  }}
	\smallskip
	\centerline{\Large {\bf COHEN-MACAULAYNESS  UNDER MODULE-FINITE  EXTENSIONS}}
	\smallskip

	\medskip
	\vskip 0.7cm
	
	\centerline {TRAN DO MINH CHAU}
	\centerline { Thai Nguyen University of Education, Thai Nguyen, Vietnam}
	\centerline {E-mail: chautdm@tnue.edu.vn}
	
	\vskip 0.6cm

	\centerline { Dedicated to Professor Nguyen Tu Cuong  on the occasion of his 75th birthday.}
	
	\vskip 1cm
	
\noindent{\bf Abstract} {\footnote{ {\it{Key words and phrases: }} Module-finite extension; local cohomology module; idealization; Cohen-Macaulay  module. \hfill\break{\it{2020 Subject  Classification: }} 13B02, 13D45, 13E05,  13E10.}
		
This paper deals with certain classes of modules under module-finite extensions.  Let  $\varphi: R\hookrightarrow S$  be a module-finite extension between commutative Noetherian local rings. We investigate  the transfer of Artinian module structures and attached primes between $R$ and $S$. We clarify the behavior of local cohomology modules as well as the structure of finitely generated $S$-modules under the restriction of scalars to $R$ via $\varphi$. We show that $R$ is a quotient of a Cohen-Macaulay local ring if and only if so is $S$. As an application, we characterize the structure of Nagata's idealization. Using Macaulayfication of  algebraic varieties and idealization, we give an example to illustrate the results.	 
			 		
\section{Introduction}
		
\ \ \ \  Throughout this paper, let $(R,\frak m )$ and $(S,\frak n )$ be  Noetherian local rings. Let  $\varphi: R\hookrightarrow S$  be a module-finite extension, i.e. $\varphi$ is a ring monomorphism and $S$ is  finitely generated as an $R$-module defined by $\varphi$. 
				
Module-finite extensions appear in various areas of commutative algebra and algebraic geometry with significant applications. The  Direct Summand Conjecture  formulated by M. Hochster \cite{Ho1}  asserts that regular local rings are direct summands of their module-finite extensions. This conjecture has been known to be equivalent to other important homological conjectures.  The Direct Summand Conjecture was proved by M. Hochster \cite{Ho1}  in equicharacteristic case and by Y. Andr\'{e} \cite{And} in mixed characteristic case based on Peter Scholze's theory of perfectoid spaces. The Noether Normalization Theorem \cite{EM} ensures that any finitely generated algebra over a field $K$ can be viewed as   a module-finite extension of a  polynomial ring over $K$.  Thus, Noether Normalization Theorem provides the structure of finitely generated algebra over a field, while the above Hochster's result provides the splitting property for that structure.		

An important problem concerning module-finite extension is whether  there exists a Cohen-Macaulayfication of a local ring as well as the Macaulayfication of an algebraic variety. There are some  criteria for the existence of a Cohen-Macaulayfication of a local ring $R$ (i.e. a Cohen-Macaulay intermediate  ring $S$ between $R$ and the total  quotient ring $Q(R)$ such that  $R\hookrightarrow S$ is a module-finite extension), see \cite{AG, Sch1}.

A significant module-finite extension is the canonical injection $R\hookrightarrow R\ltimes M$, where  $R\ltimes M$ is the  idealization of an $R$-module $M$ over $R$ introduced by M. Nagata \cite{Na}. The notion of idealization  is  a powerful technique that turns a module into an ideal  within a larger ring, which helps to construct examples of  rings with  specific properties. Idealization is  a crucial tool in deformation theory, homological algebra and local algebra. For example, idealization is used to give a criterion for a local ring being a quotient of a Gorenstein local ring. Concretely, I. Reiten \cite{Re} proved that if $R$  is Cohen-Macaulay and $M\neq 0$ is a finitely generated $R$-module, then $R\ltimes M$  is Gorenstein if and only if $M$ is a Gorenstein module of rank $1.$ In this case, $R$ is a quotient of the Gorenstein local ring $R\ltimes M.$ 
	
In this paper, we study Artinian module structures, local cohomology modules and certain  classes of finitely generated modules  under module-finite extension $\varphi:R\to S.$   We investigate  the transfer of Artinian module structures and attached primes between $R$ and $S$.  We clarify the behavior of local cohomology modules as well as certain structures of finitely generated $S$-modules under the restriction of scalars to $R$ via $\varphi$. We show that $R$ is a quotient of a Cohen-Macaulay local ring if and only if so is $S$. As an application, we characterize the structure of the idealizations. 
	 
 The following theorem, which is the first main result of this paper, gives results on the Artinian module structures as well as attached primes under the effect of $\varphi$. If $A$ is an Artinian $R$-module,  set $\dim_R(A)=\dim(R/\Ann_RA).$    For a finitely generated $R$-module $M$, set $$\Assh_R(M)=\{\frak p\in\Ass_R(M)\mid \dim(R/\frak p)=\dim_R(M)\}.$$

\begin{theorem}\label{T:1a} The following statements are true. 
\begin{itemize}

\item [{\rm (a)}]  Let	 $B$  be  an $S$-module. Then $B$ is Artinian as an $S$-module if and only if $B$ is   Artinian as an $R$-module defined by $\varphi$. In this case we have $\dim_{S}(B)=\dim_R(B)$ and	$$\Att_S(B)\subseteq\bigcup_{\frak p\in\Att_R(B)}\Assh_S(S/\frak pS).$$	
	
\item [{\rm (b)}] If $A$ is an Artinian $R$-module then $A\otimes_RS$ is an Artinian $S$-module. In this case we have $\dim_S(A\otimes_RS)=\dim_R(A)$ and  					
$$\Att_S(A\otimes_RS)\subseteq\bigcup_{\frak p\in\Att_R(A)}\Assh_S(S/\frak pS).$$ 			
\item [{\rm (c) }]  $\Att_S(B)=\bigcup_{\frak p\in\Att_R(B)}\Assh_S(S/\frak pS)$ for every Artinian $S$-module $B$ if and only if the induced map $\varphi^*:\Spec(S)\to\Spec(R)$ sending $\frak P$ to $\frak P\cap R$ is a bijection.				
\end{itemize}

\end{theorem}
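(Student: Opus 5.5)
The plan is to prove everything from three standard inputs: Macdonald's secondary representation theory, Matlis duality, and the fact that $R\hookrightarrow S$ is integral. The integral extension gives lying over, incomparability and going up. It also gives the following: if $\frak a$ is an ideal of $S$ then $R/(\frak a\cap R)\hookrightarrow S/\frak a$ is again a module-finite injective extension, so $\dim S/\frak a=\dim R/(\frak a\cap R)$.

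For (a), I would first handle Artinianness. If $B$ is Artinian over $R$, it is Artinian over $S$, because every $S$-submodule is an $R$-submodule. Conversely, suppose $B$ is an Artinian $S$-module. Then $B$ embeds into $E_S(S/\frak n)^t$ for some $t$. Moreover $E_S(S/\frak n)$ is a direct summand of $\Hom_R(S,E_R(R/\frak m))$, which is injective over $S$ and has $\frak n$-torsion essential socle. Since $S$ is a finitely generated $R$-module, $\Hom_R(S,E_R(R/\frak m))$ is a quotient of $E_R(R/\frak m)^s$, hence Artinian over $R$, so $B$ is Artinian over $R$. For the dimension, $\Ann_RB=\Ann_SB\cap R$, and the remark above gives $\dim_S(B)=\dim_R(B)$.

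For the attached primes in (a), I would use Macdonald's characterization: $\frak P\in\Att_S(B)$ iff $B$ has a $\frak P$-secondary $S$-quotient $B/C$. Such a quotient is $(\frak P\cap R)$-secondary as an $R$-module. Indeed, for $r\in R$, if $r\notin\frak P$ then multiplication by $r$ on $B/C$ is surjective, and if $r\in\frak P\cap R$ it is nilpotent. Hence $\frak p:=\frak P\cap R\in\Att_R(B)$. Then $\frak P\supseteq\frak pS$, and by incomparability $\frak P$ is minimal over $\frak pS$, so $\frak P\in\Ass_S(S/\frak pS)$. Finally $\dim S/\frak P=\dim R/\frak p=\dim S/\frak pS$, so $\frak P\in\Assh_S(S/\frak pS)$. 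The same argument applied to a $\frak P'$-secondary quotient shows a fact used in (c): $\Att_R(B)=\{\frak P\cap R\mid \frak P\in\Att_S(B)\}$. The inclusion $\subseteq$ comes from a minimal secondary $S$-representation read over $R$.

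For (b), a surjection $R^n\to S$ gives a surjection $A^n\to A\otimes_RS$. So $A\otimes_RS$ is an Artinian $R$-module, hence Artinian over $S$ by (a), and $\Att_R(A\otimes_RS)\subseteq\Att_R(A)$. Applying (a) to $B=A\otimes_RS$ gives the inclusion of attached primes. The dimension equality is the step I expect to be the main obstacle: the surjection only yields $\le$. For $\ge$, I would pass to $\widehat R$ and use Matlis duality. The dual of $A\otimes_RS$ is $\Hom_R(S,D(A))$, where $D(A)$ is finitely generated over $\widehat R$. Since $S$ is a faithful finite $R$-module, $\Supp\Hom_{\widehat R}(\widehat S,D(A))=\Supp\widehat S\cap\Supp D(A)=\Supp D(A)$. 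Dimensions of Artinian modules are preserved under completion and duality, which gives equality.

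For (c), suppose first that $\varphi^*$ is bijective. For $\frak p\in\Att_R(B)$, every prime minimal over $\frak pS$ lies over $\frak p$, so $\Assh_S(S/\frak pS)$ is the singleton $\{\frak P\}$, where $\frak P$ is the unique prime over $\frak p$. By the equality $\Att_R(B)=\varphi^*(\Att_S(B))$, some $\frak Q\in\Att_S(B)$ lies over $\frak p$, and then $\frak Q=\frak P$; this gives the reverse inclusion. Conversely, $\varphi^*$ is always surjective by lying over. If it is not injective, pick $\frak P\neq\frak P'$ over the same $\frak p$, and take $B=\Hom_S(S/\frak P,E_S(S/\frak n))$. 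By Matlis duality over $\widehat S$, $\Att_S(B)=\Ass_S(S/\frak P)=\{\frak P\}$, so $\Att_R(B)=\{\frak p\}$. On the other hand, both $\frak P$ and $\frak P'$ are minimal over $\frak pS$ by incomparability, and both have dimension $\dim R/\frak p$. So $\frak P'\in\Assh_S(S/\frak pS)\setminus\Att_S(B)$, and equality fails.
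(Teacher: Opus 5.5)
\textbf{A step in (b) fails as written.} You pass from $\dim_{\widehat R}(A\otimes_RS)=\dim_{\widehat R}(A)$ to $\dim_R(A\otimes_RS)=\dim_R(A)$ by saying that dimensions of Artinian modules are preserved under completion. For the dimension used in the statement, $\dim_R(A)=\dim(R/\Ann_RA)$, this is false in general.

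Take a two-dimensional local domain $R$ that is not universally catenary (one exists by Nagata). Then $\widehat R$ has an associated prime $\frak Q$ with $\dim\widehat R/\frak Q=1$. For $A=H^1_{\frak m}(R)\cong H^1_{\widehat{\frak m}}(\widehat R)$:
\begin{itemize}
\item $\frak Q\in\Att_{\widehat R}(A)$ by Lemma~\ref{L:2}(d).
\item $\frak Q\cap R=0$ by flatness, so $0\in\Att_R(A)$ and $\dim_R(A)=2$.
\item $\dim(\widehat R/\Ann_{\widehat R}A)\le 1$ by Lemma~\ref{L:2}(e) applied over $\widehat R$, which is a quotient of a regular local ring.
\end{itemize}

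The repair uses what you actually proved, namely $\Supp_{\widehat R}D(A\otimes_RS)=\Supp_{\widehat R}D(A)$. This says $\sqrt{\Ann_{\widehat R}(A\otimes_RS)}=\sqrt{\Ann_{\widehat R}(A)}$. Since $\Ann_RX=\Ann_{\widehat R}X\cap R$ for every Artinian $R$-module $X$, and contraction commutes with radicals, you get $\sqrt{\Ann_R(A\otimes_RS)}=\sqrt{\Ann_R(A)}$. Hence the dimensions agree.

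Equivalently, the duals have the same associated primes over $\widehat R$, so $\Att_{\widehat R}(A\otimes_RS)=\Att_{\widehat R}(A)$; contracting gives $\Att_R(A\otimes_RS)=\Att_R(A)$. This is exactly the paper's Lemma~\ref{L:2a}. The paper obtains it from the formula $\Att_R(A\otimes_RM)=\Att_R(A)\cap\Supp_R(M)$ and then applies Lemma~\ref{L:2}(a).

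Also, $\Supp\Hom(M,N)=\Supp M\cap\Supp N$ is not a general identity: $\Hom_R(R/\frak m,R)=0$ for a domain of positive dimension. Here it holds only because $\widehat S$ is faithful, so $\Ass\Hom_{\widehat R}(\widehat S,D(A))=\Supp\widehat S\cap\Ass D(A)=\Ass D(A)$.

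\textbf{The rest is correct, and in places differs usefully from the paper.}
\begin{itemize}
\item For the Artinian transfer in (a), the paper uses Melkersson's criterion (Lemma~\ref{L:1}) with the length comparison of Lemma~\ref{L:4}. You instead embed $B$ into copies of $E_S(S/\frak n)$, a direct summand of (indeed isomorphic to) $\Hom_R(S,E_R(R/\frak m))$. A surjection $R^s\to S$ makes $\Hom_R(S,E_R(R/\frak m))$ a \emph{submodule}, not a quotient, of $E_R(R/\frak m)^s$; that still suffices.
\item Your dimension argument via $\Ann_RB=\Ann_SB\cap R$ is shorter than the paper's route through attached primes, and it works for every $S$-module.
\item Your secondary-quotient argument proves from scratch the contraction formula $\Att_R(B)=\{\frak P\cap R\mid\frak P\in\Att_S(B)\}$, which the paper quotes as Lemma~\ref{L:2}(c).
\item Part (c) is the paper's argument, with the Matlis dual $\Hom_S(S/\frak P,E_S(S/\frak n))$ in place of $H^t_{\frak n}(S/\frak P)$.
\end{itemize}
In (c) you assert that, when $\varphi^*$ is bijective, every prime minimal over $\frak pS$ lies over $\frak p$. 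This is true (go up from the unique prime over $\frak p$), but it needs justification. All you need is that each $\frak Q\in\Assh_S(S/\frak pS)$ lies over $\frak p$, and that holds with no hypothesis: $\frak Q\cap R\supseteq\frak p$ and $\dim R/(\frak Q\cap R)=\dim S/\frak Q=\dim S/\frak pS=\dim R/\frak p$.
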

From Theorem \ref{T:1a}(b), it is natural to ask if  $A$ is an Artinian $R$-module provided $A\otimes_RS$ is  Artinian. Lemma \ref{L:5}(b) together with Hochster's Direct Summand Conjecture shows that if either $R$ is a regular ring or $S$ is an idealization of a finitely generated module over $R$, then the answer to this question is  affirmative.

 The notion of Cohen-Macaulay module is a central object of research in commutative algebra. Some important extensions of the class of    Cohen-Macaulay modules are the classes of   generalized Cohen-Macaulay modules,  $1$-Cohen-Macaulay modules,  sequentially  Cohen-Macaulay modules, sequentially  generalized Cohen-Macaulay modules and sequentially  $1$-Cohen-Macaulay modules, cf. \cite{CSSS, CN, CST, LLN, LN, NDC, Sch, St}.   
 
 The following theorem, which is the second main result of this paper, describes  the behavior of local cohomology modules as well as characterizes the above classes of modules under module-finite extensions. Moreover,  the property of being a quotient of a Cohen-Macaulay local ring can  transfer from $S$ to $R$ as well as from $R$ to $S.$  
 
\begin{theorem}\label{T:2a} Let $N$ be a finitely generated $S$-module. The following statements hold true. 
\begin{itemize}			
\item [{\rm (a)}] For each integer $i$, there is an isomorphism $H^i_{\frak n}(N)\cong  H^i_{\frak m}(N)$ of Artinian $R$-modules. Moreover, if $H^i_{\frak n}(N)\neq 0$ then  
$\dim_S(H^i_{\frak n}(N))=\dim_R(H^i_{\frak m}(N)).$
\item [{\rm (b)}] $N$ is Cohen-Macaulay (resp. generalized Cohen-Macaulay, $1$-Cohen-Macaulay,  sequentially  Cohen-Macaulay, sequentially  generalized Cohen-Macaulay, sequentially  $1$-Cohen-Macaulay)  as an $S$-module if and only if so is $N$ as an $R$-module.
\item [{\rm (c)}]  $R$ is a quotient of a Cohen-Macaulay local ring if and only if so is  $S$.
\end{itemize} 		
\end{theorem}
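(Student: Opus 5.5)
For (a), I will use the independence theorem for local cohomology. Since $S$ is module-finite over $R$, the ideal $\frak m S$ is $\frak n$-primary: $S/\frak mS$ is a finite-dimensional $R/\frak m$-algebra that is local, hence Artinian. Therefore $\sqrt{\frak mS}=\frak n$. For any $S$-module $N$, independence of the base gives $H^i_{\frak m}(N)\cong H^i_{\frak mS}(N)$ as $R$-modules. This can be seen by computing both sides with the Čech complex on generators $x_1,\dots,x_r$ of $\frak m$ and their images in $S$. Radical invariance then gives $H^i_{\frak mS}(N)=H^i_{\frak n}(N)$.

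Since $N$ is finitely generated, $H^i_{\frak n}(N)$ is an Artinian $S$-module. By Theorem~\ref{T:1a}(a) it is Artinian over $R$, and $\dim_S(H^i_{\frak n}(N))=\dim_R(H^i_{\frak m}(N))$. This proves (a).

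For (b), I will first record that $\dim_S N=\dim_R N$, since $\Ann_R N=\Ann_S N\cap R$ and $R/\Ann_RN\hookrightarrow S/\Ann_SN$ is integral. Likewise $\depth_S N=\depth_R N$, by Grothendieck's vanishing and non-vanishing characterization of depth and dimension via local cohomology combined with (a). This already settles the Cohen--Macaulay case. Generalized Cohen--Macaulayness means $\ell(H^i_{\frak n}(N))<\infty$ for $i<\dim N$; an $S$-module has finite length over $S$ iff it does over $R$ (Artinian and killed by a power of $\frak n$, equivalently of $\frak m$), so this case also transfers.

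The $1$-Cohen--Macaulay notion, defined as $\dim_S H^i_{\frak n}(N)\le 1$ for $i<\dim N$, transfers by the dimension equality in (a). For the sequential variants, I will use the characterizations via local cohomology and Matlis duality, or via the dimension filtration. The dimension filtration $\{N_j\}$, where $N_j$ is the largest submodule of dimension $\le j$, is the same over $R$ and over $S$. This is because $S$-submodules have equal dimensions over $R$ and $S$, and the largest $R$-submodule of dimension $\le j$ is automatically $S$-stable: $sN_j$ is an image of $N_j$, hence of dimension $\le j$. The sequential conditions ask that the quotients $N_j/N_{j-1}$ be Cohen--Macaulay (resp. generalized CM, $1$-CM) or zero, so they transfer by the non-sequential cases.

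For (c), I will use Kawasaki's theorem: a local ring is a quotient of a Cohen--Macaulay local ring iff it has a dualizing complex, iff it is universally catenary and all formal fibers are Cohen--Macaulay.

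\emph{$R$ to $S$.} If $R$ has a dualizing complex $D$, then $\mathbf{R}\Hom_R(S,D)$ is a dualizing complex for $S$. Hence $S$ is a quotient of a Cohen--Macaulay ring. Alternatively, write $R=T/I$ with $T$ Cohen--Macaulay; then $S$ is a finite $T$-algebra, and one can pass to a suitable idealization over $T$.

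\emph{$S$ to $R$.} This is the main obstacle. My first attempt uses Kawasaki's criterion through the completions. The map $\hat R\to\hat S=S\otimes_R\hat R$ is finite and injective, since completion is flat. The formal fibers of $R$ at $\frak p$ are then related to those of $S$ at the primes $\frak P$ lying over $\frak p$, using that $S_{\frak p}$ is a finite $R_{\frak p}$-module. I also need universal catenarity to descend along the finite injective map $R\hookrightarrow S$ (Ratliff). A cleaner route is via the Cohen--Macaulay-ness criterion of (b): apply the theorem that $R$ is a quotient of a CM ring iff every finitely generated module (or $R$ itself) admits a $p$-standard system of parameters. The $\frak m$-systems of parameters are systems of parameters for $S$ as well, and (b)-type local cohomology comparisons make $p$-standardness pass between $R$ and $S$, so I would try this second approach first.
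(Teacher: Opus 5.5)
Parts (a) and (b) are correct and essentially follow the paper. For the sequential notions, your direct check that the largest $R$-submodule of dimension $\le j$ is $S$-stable is a self-contained substitute for the paper's citation of [TPDA]. The gap is in (c), in both directions.

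\textbf{The direction $R\Rightarrow S$.} The equivalence you attribute to Kawasaki is false in its middle term. Having a dualizing complex characterizes homomorphic images of Gorenstein rings, not of Cohen--Macaulay rings. A Cohen--Macaulay local ring without a canonical module (the Ferrand--Raynaud examples) is trivially a quotient of a Cohen--Macaulay ring, yet has no dualizing complex. So your argument with $\mathbf{R}\Hom_R(S,D)$ only proves the Gorenstein analogue. The idealization remark is too vague to check. The obvious candidate $T\ltimes S$ does not surject onto $S$ as a $T$-algebra, since the square-zero ideal $0\ltimes S$ must land in the nilradical. It is also not Cohen--Macaulay unless $S$ is maximal Cohen--Macaulay over $T$. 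A quick correct fix: if $R=T/I$ with $T$ Cohen--Macaulay and $S=R[s_1,\dots,s_k]$, then $S$ is a quotient of $T[X_1,\dots,X_k]_{\mathfrak M}$, where $\mathfrak M$ is the preimage of $\mathfrak n$. This ring is Cohen--Macaulay.

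\textbf{The direction $S\Rightarrow R$.} Here you give no argument, only two sketches.

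The Kawasaki route can be made to work, but you leave it undone. You would descend universal catenarity via Ratliff's equidimensionality criterion. You would descend Cohen--Macaulayness of formal fibres along the faithfully flat maps $\hat R\otimes_R k(\mathfrak p)\to\hat R\otimes_R k(\mathfrak P)$; the target is the formal fibre of $S$ at $\mathfrak P$, since $\hat S=S\otimes_R\hat R$.

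The route you prefer is set up incorrectly. Part (a) compares the local cohomology of one and the same $S$-module over $R$ and over $S$. So it cannot move $p$-standardness from the ring $S$ to the ring $R$, nor reach ``every finitely generated $R$-module''.

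The missing idea, which is the paper's proof, is to use the Cuong--Cuong criterion in the form ``$R/\Ann_R(M)$ is a quotient of a Cohen--Macaulay local ring iff $M$ admits a $p$-standard system of parameters''. Apply it to the faithful $R$-module $M=S$; note $\Ann_R S=0$ because $\varphi$ is injective. The construction goes as follows.
\begin{enumerate}
\item Let $x_{i+1},\dots,x_n\in\mathfrak m$ be part of a system of parameters of $S$, and set $\bar S=S/(x_{i+1},\dots,x_n)S$.
\item Since $S$ is a quotient of a Cohen--Macaulay ring, Lemma \ref{L:2}(a),(e) together with (a) give $\dim_R H^j_{\mathfrak m}(\bar S)=\dim_S H^j_{\mathfrak n}(\bar S)\le j$ for all $j<i$.
\item Hence $\dim(R/\mathfrak a_R(\bar S))<i=\dim_R\bar S$.
\item Prime avoidance in $R$ then yields a parameter $x_i\in\mathfrak a_R(\bar S)$.
\end{enumerate}
Iterating gives a $p$-standard system of parameters of the $R$-module $S$, so $R$ is a quotient of a Cohen--Macaulay ring.

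The same construction with the roles exchanged also gives $R\Rightarrow S$ directly, with no dualizing complex. The bound now comes from Lemma \ref{L:2}(e) over $R$, applied to the $R$-modules $S/(s_{i+1},\dots,s_n)S$, and prime avoidance is done in $S$.
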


As an application of the main results, we  characterize the Cohen-Macaulayness, generalized Cohen-Macaulayness, $1$-Cohen-Macaulayness, sequentially Cohen-Macaulayness,  sequentially generalized  Cohen-Macaulayness, sequentially $1$-Cohen-Macaulayness for idealizations (Corollaries \ref{C:1}, \ref{C:2}). Using Macaulayfication of  algebraic varieties and idealization, we give an example to illustrate the results (Example \ref{E:2}).
 
 In the next section, we give some preliminaries that will be used in the sequel. The proofs of the main results, Theorems \ref{T:1a}, \ref{T:2a}, are presented in the last section.   

\section{ Preliminaries}	
		
\ \ \ \  Firstly, we recall Melkersson's criterion \cite[Theorem 1.3]{Me} for Artinian modules. Let $A$ be an $R$-module and $I$ an ideal of $R$. We say that $A$ is {\it $I$-torsion} if $A=\bigcup_{n\in\Bbb N}(0:_AI^n).$
\begin{lemma}\label{L:1}
	Let $A$ be an $R$-module. Then $A$ is an Artinian $R$-module if and only if there exists an ideal $I$ of $R$ such that $A$ is $I$-torsion and $(0:_AI)$ is Artinian.   
\end{lemma}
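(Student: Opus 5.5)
Both directions follow from the standard fact that an Artinian module has an essential socle of finite length, together with the Artin–Rees lemma.

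For the forward implication, suppose $A$ is Artinian. Every element $x\in A$ generates a submodule $Rx\cong R/\Ann_R(x)$, which is finitely generated and Artinian, hence of finite length. So $\Ann_R(x)$ is $\frak m$-primary and $\frak m^n x=0$ for some $n$. Thus $A$ is $\frak m$-torsion. Since $(0:_A\frak m)$ is a submodule of $A$, it is Artinian. Hence $I=\frak m$ works.

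For the converse, suppose $A$ is $I$-torsion and $(0:_AI)$ is Artinian. The plan has three steps.

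\emph{Reduction to $\frak m$-primary $I$.} Replacing $R$ by $R/\Ann_R(0:_AI)$ is not immediate, so instead I argue directly. Let $T=(0:_AI)$. Since $T$ is Artinian, it is $\frak m$-torsion by the first part, so $\frak m^k T=0$ on each element. The key point is that $A$ is an essential extension of $T$: for $0\neq x\in A$, pick the least $n$ with $I^nx=0$; then some element of $I^{n-1}x$ is a nonzero element of $T$. Hence $Rx\cap T\neq 0$, so $T$ is essential in $A$.

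\emph{Injective hull.} Write the Artinian module $T$ as a submodule of a finite direct sum $E(R/\frak m)^t$; this is possible because $T$ embeds in the injective hull of its finite-length socle. Let $E(T)$ be the injective hull of $T$, which is then a direct summand of $E(R/\frak m)^t$. By essentiality, the inclusion $T\subseteq A$ extends to a monomorphism $A\hookrightarrow E(T)\subseteq E(R/\frak m)^t$. Injectivity holds because the kernel meets $T$ trivially, and $T$ is essential in $A$.

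\emph{Conclusion.} Since $E(R/\frak m)$ is Artinian over the Noetherian local ring $R$ (Matlis), $E(R/\frak m)^t$ is Artinian, and so is its submodule $A$.

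The main obstacle I expect is justifying that $E(R/\frak m)$ is Artinian, a standard consequence of Matlis duality, which I would cite. A second point needing care is the embedding of $T$ into $E(R/\frak m)^t$, which uses that $\operatorname{Soc}(T)$ has finite length and is essential in $T$. Both facts hold because $T$ is Artinian.
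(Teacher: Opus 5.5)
Your proof is correct. The paper gives no argument of its own for this lemma; it quotes Melkersson \cite[Theorem 1.3]{Me}. What you have written is the standard argument behind that citation:
\begin{itemize}
\item since $A$ is $I$-torsion, $(0:_AI)$ is an essential submodule of $A$;
\item being Artinian over the local ring $R$, $(0:_AI)$ has a finite-dimensional essential socle, so $E((0:_AI))\cong E(R/\mathfrak{m})^t$;
\item hence $A\hookrightarrow E(R/\mathfrak{m})^t$, which is Artinian by Matlis' theorem.
\end{itemize}
The only substantial outside input is that $E(R/\mathfrak{m})$ is Artinian. Citing it is legitimate, since it is proved via completion and Matlis duality, independently of the criterion.

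A few cosmetic points:
\begin{itemize}
\item The Artin--Rees lemma announced in your first sentence is never used and should be dropped.
\item The paragraph headed ``Reduction to $\mathfrak{m}$-primary $I$'' does not reduce anything. It simply proves that $(0:_AI)$ is essential in $A$, and its heading should say so.
\item With the ``there exists an ideal'' formulation, the forward implication is trivial with $I=0$, since every module is $0$-torsion and $(0:_A0)=A$. Your choice $I=\mathfrak{m}$ is still worthwhile: it records that Artinian modules are $\mathfrak{m}$-torsion, a fact the paper uses tacitly in the proof of Theorem~\ref{T:1}(a).
\end{itemize}
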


I. G. Macdonald \cite{Mac} introduced a theory of secondary representation for modules.  Let $A$ be an $R$-module and $\mathfrak p\in\Spec(R)$. We say that $A$ is {\it $\mathfrak p$-secondary} if $A\neq 0$ and the multiplication by $r$ on $A$ is  surjective for all $r\in R\setminus \mathfrak p$ and  it is nilpotent for all $r\in\mathfrak p$. We say that $A$ is {\it representable} if either $A=0$ or $A$ has a minimal secondary representation $A=A_1+\ldots+A_n$, where each $A_i$ is a non-redundant $\frak p_i$-secondary component and $\frak p_i\ne\frak p_j$ for all $i\ne j.$ The set $\{\frak p_1,\ldots,\frak p_n\}$ is independent on the choice of the minimal secondary representation of $A$. It is called the set of {\it attached primes} of $A$ and denoted by $\Att_R(A).$ 
	
We recall some properties of attached primes, see \cite{BS, Mac, NDC}. For each ideal $I$ of $R$, denote by $\Var(I)$ the set of all prime ideals of $R$ containing $I.$ 		  
			 
\begin{lemma}\label{L:2}  Let $A$ be an Artinian $R$-module,  $B$ a representable $S$-module and $M$ a finitely generated $R$-module.  Then 
\begin{itemize}
\item [{\rm (a)}] $A$ is representable and $\min\Att_R(A)=\min\Var (\Ann_RA).$ In particular, $$\dim_R(A)=\max\{\dim(R/\frak p)\mid\frak p\in \Att_R(A)\}.$$
				
\item [{\rm (b)}] $A\neq 0$ if and only if $\Att_R(A)\neq \emptyset.$ If $A\neq 0$ then $\ell_R(A)<\infty$ if and only if $\dim_R(A)=0$,  if and only if $\Att_R(A)=\{\frak m\}.$ 
			
\item [{\rm (c)}]  $B$ is representable as an $R$-module defined by $\varphi$ and $$\Att_R(B)=\{\frak P\cap R\mid \frak P\in\Att_S(B)\}.$$
	 
\item [{\rm (d)}] $H^i_{\frak m}(M)$ is an Artinian $R$-module for every integer $i$.  If $\frak p\in\Ass_R(M)$ and $\dim (R/\frak p)=i$ then $\frak p\in\Att_R(H^i_{\frak m}(M)).$ If $d=\dim_R(M)$ then
$$\Att_R(H^d_{\mathfrak m}(M))=\{\mathfrak p \in \Ass_R(M) \mid \dim (R/\mathfrak p)=d\}.$$
	 	
\item [{\rm (e)}] Suppose that $R$ is a quotient of a Cohen-Macaulay local ring and $\frak p\in\Att_R(H^i_{\frak m}(M))$. Then $\dim(R/\frak p)\leq i$. 
\end{itemize}
\end{lemma}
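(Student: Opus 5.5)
Most of this lemma is standard, so the plan is to assemble it from the cited literature and give a direct argument only where restriction of scalars along $\varphi$ enters, namely part (c).

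For (a), I invoke Macdonald's theorem that every Artinian module is representable; it also gives $\min\Att_R(A)=\min\Var(\Ann_RA)$ (see \cite{Mac}, \cite[7.2]{BS}). Since $\dim(R/\Ann_RA)$ is attained at a minimal prime over $\Ann_RA$, the dimension formula follows. Part (b) is then immediate from two facts. First, $A=0$ if and only if it has no secondary components. Second, for $A\neq 0$ Artinian, $\ell_R(A)<\infty$ holds if and only if $\frak m^nA=0$ for some $n$. The latter says $\Ann_RA$ is $\frak m$-primary, which by (a) means $\dim_R(A)=0$, which in turn means $\Att_R(A)=\{\frak m\}$.

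For (c), the key step is that a $\frak P$-secondary $S$-module $B'$ is $(\frak P\cap R)$-secondary as an $R$-module. Indeed, if $r\in R\setminus(\frak P\cap R)$ then $\varphi(r)\notin\frak P$, so multiplication by $r$ on $B'$ is surjective. If $r\in\frak P\cap R$, multiplication by $r$ is nilpotent. Take a minimal $S$-secondary representation $B=\sum B_i$ and group together the components whose primes have the same contraction. A finite sum of $\frak q$-secondary modules is $\frak q$-secondary, so after grouping and deleting redundant terms we obtain a minimal $R$-representation. This gives representability and the inclusion $\Att_R(B)\subseteq\{\frak P\cap R\mid \frak P\in\Att_S(B)\}$.

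For the reverse inclusion in (c), I use the characterization that $\frak q\in\Att_R(B)$ if and only if $B$ has a $\frak q$-secondary quotient $R$-module. If $\frak P\in\Att_S(B)$, then $B$ has a $\frak P$-secondary $S$-quotient. By the key step this quotient is also a $(\frak P\cap R)$-secondary $R$-quotient, so $\frak P\cap R\in\Att_R(B)$. I expect this reverse inclusion to be the one delicate point. Grouping alone could in principle make a component redundant, and the quotient characterization is exactly what sidesteps that worry.

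Parts (d) and (e) are deeper and I would cite them. For (d), Artinianness of $H^i_\frak m(M)$ is Grothendieck's theorem \cite[7.1.3]{BS}, and the description of $\Att_R(H^d_\frak m(M))$ is Macdonald--Sharp \cite[7.3.2]{BS}. The remaining claim in (d), that a prime $\frak p\in\Ass_R(M)$ with $\dim(R/\frak p)=i$ is attached to $H^i_\frak m(M)$, I would reduce to the complete case, since completion changes neither the local cohomology nor (up to contraction) the attached primes. There I would use Matlis duality to convert attached primes of $H^i_\frak m(M)$ into associated primes of $\mathrm{Ext}$-modules over a regular ring and compare dimensions; this is the argument in \cite{NDC}. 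For (e), I would use that $R$ is a quotient of a Cohen--Macaulay ring, so the shifted localization principle applies. The argument runs as follows: $\frak p\in\Att_R(H^i_\frak m(M))$ implies $H^{i-\dim(R/\frak p)}_{\frak pR_\frak p}(M_\frak p)\neq0$, which forces $\dim(R/\frak p)\le i$. This is \cite[11.3.8]{BS} and \cite{NDC}.
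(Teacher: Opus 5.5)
Your proposal is correct and in line with the paper, which gives no argument of its own for this lemma but simply recalls it from \cite{BS, Mac, NDC}. Your direct proof of (c) is the standard one: contract each secondary component, regroup, and use the secondary-quotient characterization to see that no contracted prime is lost. It works for any ring homomorphism, not only module-finite ones.

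The one thing to tighten is the reference in (e). The implication $\frak p\in\Att_R(H^i_{\frak m}(M))\Rightarrow H^{i-\dim(R/\frak p)}_{\frak pR_{\frak p}}(M_{\frak p})\neq 0$ is the direction of the Shifted Localization Principle that fails over arbitrary local rings; the hypothesis-free weak form only gives the reverse inclusion. When $R$ is a quotient of a Cohen--Macaulay local ring, rather than a Gorenstein one, the direction you need is the theorem of Nhan--Quy \cite{NQ}.
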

			
We describe the non-vanishing and the attached primes of a tensor product of $S$ and an Artinian $R$-module. 
			
\begin{lemma}\label{L:2a} Let $A$ be an Artinian $R$-module. Then 
 $A\otimes_RS$ is an Artinian $R$-module and $$\Att_R(A\otimes_RS)=\Att_R(A).$$ In particular, $A\neq 0$ if and only if $A\otimes_RS\neq 0$.
\end{lemma}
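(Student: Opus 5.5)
Since $S$ is a finitely generated $R$-module, I would fix a presentation $R^{n}\to S\to 0$ of $R$-modules. Tensoring with $A$ over $R$ and using right exactness gives an epimorphism $A^{n}\to A\otimes_RS\to 0$ of $R$-modules. As $A^n$ is Artinian, so is its quotient $A\otimes_RS$. For the same reason $A\otimes_RS$ is annihilated by $\Ann_RA$, so $\Var(\Ann_R(A\otimes_RS))\supseteq\Var(\Ann_RA)$. Here the $R$-structure is $r(a\otimes s)=ra\otimes s=a\otimes \varphi(r)s$.

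For the inclusion $\Att_R(A\otimes_RS)\subseteq\Att_R(A)$, I would use the standard fact that attached primes of a quotient of a representable module lie among those of the module, together with $\Att_R(A^n)=\Att_R(A)$ (a finite sum of secondary representations). Combined with the epimorphism $A^n\to A\otimes_RS$, this gives the inclusion.

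The reverse inclusion is the main obstacle. The idea is that $\varphi$ is injective, so $R\to S$ is a monomorphism of finitely generated $R$-modules. I would try to show that for each $\frak p\in\Att_R(A)$ there is a quotient of $A\otimes_RS$ with $\frak p$ attached. Take a $\frak p$-secondary quotient $A\to A'\to 0$ (equivalently, use that $\frak p\in\Att_R(A)$ iff $A$ has a $\frak p$-secondary quotient). Then $A'\otimes_RS$ is a quotient of $A\otimes_RS$, so it suffices to show $\frak p\in\Att_R(A'\otimes_RS)$. This reduces the problem to the case where $A$ is $\frak p$-secondary. In that case every element of $\frak p$ acts nilpotently on $A\otimes_RS$, so all its attached primes contain $\frak p$. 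Moreover, every $r\notin\frak p$ acts surjectively on $A$, hence on $A\otimes_RS$, so the attached primes are contained in $\frak p$. Therefore $\Att_R(A\otimes_RS)\subseteq\{\frak p\}$, and it is enough to show $A\otimes_RS\neq 0$.

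This last point is the real content: the ``in particular'' statement, $A\neq0\Rightarrow A\otimes_RS\ne0$. I would prove it via Matlis duality. Set $E=E_R(R/\frak m)$. Since $A$ is Artinian, $A$ is a module over $\widehat R$ and $D(A)=\Hom_R(A,E)$ is a nonzero finitely generated $\widehat R$-module. By adjunction,
$$D(A\otimes_RS)\cong\Hom_R(S,D(A))\cong\Hom_{\widehat R}(\widehat S,D(A)),$$
where $\widehat S=S\otimes_R\widehat R$. The module $\Hom_{\widehat R}(\widehat S,D(A))$ is nonzero: $\widehat R\to\widehat S$ is still injective (completion is flat) and module-finite, so $\Supp_{\widehat R}\widehat S=\Spec\widehat R$. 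Hence $\Ann_{\widehat R}\widehat S$ is nilpotent, and $\Hom(\widehat S,D(A))\ne0$ by the standard criterion that $\Hom(L,N)\neq0$ when $\Supp L\cap\Ass N\neq\emptyset$ for finitely generated modules. Since $D$ is faithful on Artinian modules, this gives $A\otimes_RS\ne0$. The converse direction is trivial, because $A=0$ forces the tensor product to vanish.
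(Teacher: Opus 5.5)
Your proof is correct, but it takes a different route from the paper. The paper settles the lemma by citing Melkersson--Schenzel (Proposition 5.2), which gives $\Att_R(A\otimes_R M)=\Att_R(A)\cap\Supp_R(M)$ for any finitely generated $R$-module $M$. Since $\varphi$ is injective, $\Supp_R(S)=\Spec(R)$, so the equality of attached primes is immediate. The nonvanishing is then read off from Lemma \ref{L:2}(b): $A\neq 0$ if and only if $\Att_R(A)\neq\emptyset$.

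You reverse this order. First you prove $A\otimes_RS\neq 0$ directly by Matlis duality, via $D(A\otimes_RS)\cong\Hom_{\widehat R}(\widehat S,D(A))$ and $\Ass\Hom(L,N)=\Supp L\cap\Ass N$. Then you obtain the attached primes from secondary representation theory:
\begin{itemize}
\item $\subseteq$ comes from the epimorphism $A^n\to A\otimes_RS$;
\item $\supseteq$ comes from passing to a $\mathfrak p$-secondary quotient $A'$ of $A$ and noting that $A'\otimes_RS$ is again $\mathfrak p$-secondary once it is nonzero.
\end{itemize}
All of these steps check out. Incidentally, $\Ann_{\widehat R}\widehat S$ is actually zero, not just nilpotent, since $\widehat R\hookrightarrow\widehat S$ is a unital $\widehat R$-linear injection.

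What each approach buys: your argument is self-contained modulo standard facts, and it yields the by-product that $-\otimes_RS$ sends $\mathfrak p$-secondary Artinian modules to $\mathfrak p$-secondary modules. The paper's citation is much shorter and covers an arbitrary finitely generated $M$ in place of $S$.

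Your duality computation already contains the full statement, so the secondary-quotient reduction is optional. Since $\Att_{\widehat R}(X)=\Ass_{\widehat R}D(X)$ for Artinian $X$, and $\Supp_{\widehat R}\widehat S=\Spec\widehat R$, you get $\Att_{\widehat R}(A\otimes_RS)=\Ass_{\widehat R}D(A)=\Att_{\widehat R}(A)$ directly. Contracting to $R$ then gives $\Att_R(A\otimes_RS)=\Att_R(A)$.
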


\begin{proof} Since $S$ is finitely generated as an $R$-module,  $A\otimes_RS$ is an Artinian $R$-module. As $\varphi$ is injective, we have $\Supp_R(S)=\Spec(R).$ Therefore, we get  by \cite[Proposition 5.2]{MSch} that 
$$\Att_R(A\otimes_RS)=\Att_R(A)\cap \Supp_R(S)=\Att_R(A).$$
The remaining statement follows   by Lemma \ref{L:2}(b). 
\end{proof}			
			 
The following example shows that monomorphisms between Artinian modules are not preserved under  module-finite extensions in general.  Moreover, we cannot embed an Artinian $R$-module $A$ into $A\otimes_RS$.  
 
\begin{example} \label{E:1} {\rm Let $S=K[[t]]$ be the formal power series ring of one variable over a field $K$ and $R=K[[t^2, t^3]]$. Let $\mathfrak{m}=(t^2, t^3)$, the unique maximal ideal of $R$. Let   $A=R/\mathfrak{m}^2$, $A'=\mathfrak{m}/\mathfrak{m}^2$, $E= E_R(R/\frak m)$ the injective hull of the residue field of $R$. 
Then $R\hookrightarrow S$ is a module-finite extension, $A, A', E$ are Artinian $R$-modules, the induced homomorphism $f^*: A'\otimes_R S\to A\otimes_R S$   is not injective, and the homomorphism $E\to E\otimes_RS$ sending $a$ to $a\otimes 1$ is not  injective}.
\end{example}

\begin{proof} Note that $A'$ is a vector space over  $R/\frak m$ with a basis $\{t^2, t^3\}$ and  $S/\mathfrak{m}S$ is a vector space over  $R/\frak m$ with a basis $\{1, t\}$. Since $\mathfrak{m}A'=0,$ we have $$A' \otimes_RS \cong A' \otimes_{R/\mathfrak{m}} (S/\mathfrak{m}S).$$ Hence   $A' \otimes_R S$ is a vector space over  $R/\frak m$ of dimension  $4$ with a basis $\{t^2 \otimes 1, t^2 \otimes t, t^3 \otimes 1, t^3 \otimes t\}$. As $t^3 \otimes t$ is an element in  this  basis, $t^3 \otimes t \neq 0$ in  $A' \otimes_R S$. Note that    
$$A \otimes_R S \cong S/\mathfrak{m}^2 S\cong K[[t]]/(t^4).$$ So, $t^4(A \otimes_R S)=0$. Since  $f^*(t^3 \otimes t) = t^3 \otimes t = t^3(1\otimes t)=t^4(1\otimes 1),$ we have $f^*(t^3 \otimes t)=0$ in $A \otimes_R S.$ Therefore, $f^*$ is not injective.

Now we prove that the homomorphism $E \to E \otimes_R S$ is not an injection of $R$-modules. Suppose on the contrary that this homomorphism  is injective. Note that  $\Hom_R(E, E)\cong  R$ and $$\text{Hom}_R(E \otimes_R S, E)\cong \text{Hom}_R(S, \Hom_R(E, E))\cong \Hom_R(S, R).$$ So, applying Matlis duality to $E \to E \otimes_R S$   provides a surjection $\Hom_R(S, R) \to R$ of $R$-modules sending $g$ to $g(1).$  Since $1\in R$, there exists   $g \in \Hom_R(S, R)$ such that $g(1)=1.$  Note that $t\in S$ and $t^2, t^3\in R$, so we have
$$t^2 g(t) = g(t^3) = t^3 g(1) = t^3.$$ As $R$ is a domain, we have $g(t) = t\notin R$, this gives a contradiction. 
\end{proof}
 		
We recall some basic properties of module-finite extensions. For an ideal $I$ of $R$ and an ideal  $\frak J$ of $S$, the extension ideal $\varphi(I)S$ of $I$ is denoted by $IS,$ the contraction ideal $\varphi^{-1}(\frak J)$ of $\frak J$ is denoted by $\frak J\cap R$.  

\begin{remark}\label{R:3} {\rm Since $\varphi: R\hookrightarrow S$ is a module-finite extension,  $S$ is finitely generated as an $R$-algebra and $S$ is integral over $R$. So, we can find  the following facts in   \cite[Chapter 5]{AM}.}
\begin{itemize}
\item [{\rm (a)}]  {\rm The induced map $\varphi^*:  \Spec(S)\to \Spec(R)$ sending $\frak P$ to $\frak P\cap R$ is a surjection. In particular, $\frak n\cap R=\frak m$ and $\frak mS\subseteq\frak n$}.
\item [{\rm (b)}] {\rm $\varphi$ satisfies the going-up property.} 
\item [{\rm (c)}] {\rm Let $\frak P, \frak Q\in\Spec (S)$. If $\frak P\subseteq \frak Q$ and $\frak P\cap R=\frak Q\cap R$, then $\frak P=\frak Q.$}
\end{itemize}
\end{remark}

Using Remark \ref{R:3}, we get the following properties of module-finite extensions. These properties should also be well-known somewhere. For convenience, we provide a proof here. 
	
\begin{lemma}\label{L:3a} The following statements are true.
\begin{itemize}
\item [{\rm (a)}] $\dim (R)=\dim (S).$
\item [{\rm (b)}] $\frak pS\cap R=\frak p$ for all  $\frak p\in\Spec(R).$ 
\item [{\rm (c)}] If $\frak P\in\Spec(S)$ and $\frak p=\frak P\cap R,$ then $\frak P\in\Assh_S(S/\frak pS).$
\end{itemize}
\end{lemma}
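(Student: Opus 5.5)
The plan is to prove the three parts in order, using only the integral-extension facts in Remark \ref{R:3} and the finiteness of $S$ over $R$.

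For (a), take a strictly increasing chain $\frak P_0\subsetneq\cdots\subsetneq\frak P_k$ in $\Spec(S)$. By Remark \ref{R:3}(c), the contractions $\frak P_j\cap R$ form a strictly increasing chain in $\Spec(R)$, so $\dim(S)\le\dim(R)$. Conversely, start from a chain $\frak p_0\subsetneq\cdots\subsetneq\frak p_k$ in $\Spec(R)$. Lying over (Remark \ref{R:3}(a)) gives some $\frak P_0$ with $\frak P_0\cap R=\frak p_0$. Repeated going-up (Remark \ref{R:3}(b)) then gives $\frak P_0\subseteq\cdots\subseteq\frak P_k$ with $\frak P_j\cap R=\frak p_j$. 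These inclusions are strict because the contractions are distinct, so $\dim(R)\le\dim(S)$.

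For (b), lying over gives some $\frak P\in\Spec(S)$ with $\frak P\cap R=\frak p$. Since $\frak pS\subseteq\frak P$, we get $\frak p\subseteq\frak pS\cap R\subseteq\frak P\cap R=\frak p$, hence equality.

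For (c), first note $\frak pS\subseteq\frak P$, so $\frak P\in\Supp_S(S/\frak pS)$. Two things remain: that $\frak P$ is minimal over $\frak pS$, which makes it associated, and that $\dim(S/\frak P)=\dim_S(S/\frak pS)$.

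For minimality, suppose $\frak pS\subseteq\frak Q\subseteq\frak P$ with $\frak Q$ prime. Then $\frak p\subseteq\frak Q\cap R\subseteq\frak P\cap R=\frak p$, so $\frak Q\cap R=\frak P\cap R$. Remark \ref{R:3}(c) then forces $\frak Q=\frak P$. Hence $\frak P\in\min\Var(\frak pS)\subseteq\Ass_S(S/\frak pS)$.

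For the dimension, apply part (a) to the module-finite extension $R/\frak p\hookrightarrow S/\frak P$. It is injective because $\frak P\cap R=\frak p$, and $S/\frak P$ is finite over $R/\frak p$. This gives $\dim(S/\frak P)=\dim(R/\frak p)$.

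It remains to show $\dim_S(S/\frak pS)=\dim(R/\frak p)$. The extension $R/\frak p\to S/\frak pS$ is injective by (b) and is module-finite, so part (a) applies again and gives $\dim(S/\frak pS)=\dim(R/\frak p)$. Hence $\dim(S/\frak P)=\dim_S(S/\frak pS)$, and $\frak P\in\Assh_S(S/\frak pS)$.

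The main obstacle is this last dimension comparison. It depends on (b) to know that $R/\frak p\to S/\frak pS$ is injective, so that part (a), which was proved for an arbitrary module-finite extension of Noetherian rings, can be applied to it. Part (a) never uses locality, so this is legitimate.
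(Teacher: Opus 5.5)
Your proof is correct. Parts (a) and (b) match the paper's argument: lying over plus going-up give $\dim(R)\le\dim(S)$, incomparability gives the reverse inequality, and lying over gives (b).

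In (c) you reach the associatedness of $\mathfrak P$ by a slightly different route.

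\emph{Your route.} You use incomparability (Remark \ref{R:3}(c)) to show directly that $\mathfrak P$ is a minimal prime of $\mathfrak pS$, hence lies in $\Ass_S(S/\mathfrak pS)$. You then check the dimension condition separately.

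\emph{The paper's route.} It picks some $\mathfrak Q\in\Ass_S(S/\mathfrak pS)$ with $\mathfrak Q\subseteq\mathfrak P$. It then uses the chain $\dim(R/\mathfrak p)=\dim(S/\mathfrak P)\le\dim(S/\mathfrak Q)\le\dim(S/\mathfrak pS)=\dim(R/\mathfrak p)$ to force $\mathfrak Q=\mathfrak P$. This last step tacitly uses that a proper inclusion of primes strictly raises the (finite) dimension of the quotient, which holds because $S$ is Noetherian local.

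Both routes rest on the same two applications of (a): to $R/\mathfrak p\hookrightarrow S/\mathfrak P$, and to $R/\mathfrak p\hookrightarrow S/\mathfrak pS$, which is injective by (b). Your version separates the two conditions in the definition of $\Assh$. It also gives a slightly stronger fact that needs no dimension argument: every prime lying over $\mathfrak p$ is minimal over $\mathfrak pS$. The paper's version obtains both conditions from a single chain of inequalities.
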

	
\begin{proof} (a) Let $\dim (R)=d$ and $\frak p_0\subset  \frak p_1\subset \ldots \subset \frak p_d=\frak m$ be a saturated chain of prime ideals of $R$. By Remark \ref{R:3}(a), there exists $\frak P_0\in\Spec(S)$ such that $\frak P_0\cap R=\frak p_0.$ By Remark \ref{R:3}(b), there is a chain $\frak P_0\subset  \frak P_1\subset \ldots \subset \frak P_d=\frak n$  of prime ideals of $S$ such that $\frak P_i\cap R=\frak p_i$. Hence $d\leq \dim (S).$ Let $\dim(S)=d'$ and $\frak P_0\subset  \frak P_1\subset \ldots \subset \frak P_{d'}=\frak n$ be a saturated chain of prime ideals of $S$. Set $\frak p_i=\frak P_i\cap R.$ We get by Remark \ref{R:3}(c)  that $\frak p_i\neq \frak p_{i+1}$ for all $i$. Therefore, $d'\leq \dim (R).$  
				
(b) Let $\frak p\in\Spec(R).$ By  Remark \ref{R:3}(a),  there exists $\frak P\in\Spec(S)$ such that $\frak P\cap R=\frak p.$ Hence $\frak p\subseteq \frak pS\cap R\subseteq \frak P\cap R=\frak p.$ Therefore, $\frak pS\cap R=\frak p$.
		
(c) Since $\frak P\supseteq \frak pS$, there exists $\frak Q\in\Ass(S/\frak pS)$ such that $\frak P\supseteq \frak Q.$ Since $\frak p=\frak P\cap R$ and $S$ is finitely generated as an $R$-module, the canonical injection $R/\frak p\hookrightarrow S/\frak P$ is a module-finite extension. So we get by assertion (a) that 
$$\dim(R/\frak p)=\dim(S/\frak P)\leq \dim(S/\frak Q)\leq \dim(S/\frak pS).$$ 
Moreover, since $\frak pS\cap R=\frak p$  by assertion (b) and $S$  is finitely generated as an $R$-module, the canonical injection $R/\frak p\hookrightarrow S/\frak pS$ is a module-finite  extension. Hence $\dim(S/\frak pS)=\dim(R/\frak p)$ by assertion (a). Therefore, $$\dim(S/\frak P)= \dim(S/\frak Q)= \dim(S/\frak pS).$$ So, $\frak Q=\frak P\in\Assh_S(S/\frak pS).$
\end{proof}

For an  $S$-module $B$, we consider $B$ as an $R$-module defined by $\varphi$.   It is clear that if $\ell_R(B)<\infty$ then $\ell_S(B)<\infty.$ The following lemma shows that the converse statement is also true.
		
\begin{lemma}\label{L:4} Let $B$ be an $S$-module. The following statements are true.
\begin{itemize}
\item [{\rm (a)}] $\ell_R(S/\frak mS)<\infty.$ In particular, $\ell_R(S/\frak n)<\infty.$
\item [{\rm (b)}] $\ell_S(B)<\infty$   if and only if $\ell_R(B)<\infty.$ In this case, we have  $$\ell_R(B)=\ell_S(B)~\ell_R(S/\frak n).$$ 
\end{itemize}
\end{lemma}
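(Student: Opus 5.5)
For part (a), I will use that $S$ is a finitely generated $R$-module, so $S/\frak mS$ is a finitely generated module over the Artinian ring $R/\frak m S\cap R$. More precisely, $\frak mS\cap R=\frak m$ by Lemma \ref{L:3a}(b), and $S/\frak mS$ is a finitely generated $R/\frak m$-module, since $\frak m$ annihilates it. Hence it is a finite-dimensional vector space over the field $R/\frak m$, and $\ell_R(S/\frak mS)<\infty$. By Remark \ref{R:3}(a) we have $\frak mS\subseteq\frak n$, so $S/\frak n$ is a quotient of $S/\frak mS$, and therefore $\ell_R(S/\frak n)<\infty$.

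For part (b), the direction ``$\ell_R(B)<\infty\Rightarrow\ell_S(B)<\infty$'' is immediate: every chain of $S$-submodules of $B$ is in particular a chain of $R$-submodules, so $\ell_S(B)\le\ell_R(B)$. For the converse and the formula, suppose $\ell_S(B)=n<\infty$ and choose a composition series
$$0=B_0\subset B_1\subset\cdots\subset B_n=B$$
of $S$-modules. Each factor $B_j/B_{j-1}$ is a simple $S$-module. Since $S$ is local, each such factor is isomorphic to $S/\frak n$ as an $S$-module, hence also as an $R$-module via $\varphi$.

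Length is additive on short exact sequences of $R$-modules. Applying this along the filtration, which is also a filtration by $R$-submodules, gives
$$\ell_R(B)=\sum_{j=1}^n\ell_R(B_j/B_{j-1})=n\,\ell_R(S/\frak n)=\ell_S(B)\,\ell_R(S/\frak n).$$
This is finite by part (a), which proves both the converse implication and the formula. The only point needing care is the finiteness of $\ell_R(S/\frak n)$, and that is exactly what part (a) supplies.
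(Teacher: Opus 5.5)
Your proof is correct and matches the paper's argument: in (a), $S/\frak mS$ is a finite-dimensional $R/\frak m$-vector space and $\frak mS\subseteq\frak n$; in (b), you take an $S$-composition series whose factors are all $S/\frak n$ and use additivity of $\ell_R$. The appeal to Lemma \ref{L:3a}(b) and the phrase about the ring $R/\frak mS\cap R$ are unnecessary, since the fact that $\frak m$ annihilates $S/\frak mS$ is all you need.
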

\begin{proof} (a) Since $S$ is a finitely generated $R$-module, it follows that $S/\frak mS$ is a vector space of finite dimension over  $R/\frak m$. Hence $\ell_R(S/\frak mS)<\infty.$   Since  $\frak mS\subseteq \frak n$ by Remark \ref{R:3}(a), we have $\ell_R(S/\frak n)<\infty.$ 
		
(b) It is clear that if $\ell_R(B)<\infty$ then $\ell_S(B)<\infty$. Assume that $\ell_S(B)=t<\infty.$ If $t=0$ then there is nothing to do. Assume that    $t>0$. Let  $0=B_0\subset B_1\subset \ldots\subset B_t=B$ be a chain of $S$-submodules of $B$ such that $B_i/B_{i-1}\cong S/\frak n$ for all $i\geq 1.$ Then we get $$\ell_R(B)=\sum_{i=1}^{t}\ell_R(B_i/B_{i-1})=t\ell_R(S/\frak n)<\infty.$$	
\end{proof}

\section{Main results}	
	
\ \ \ \ \ The following theorem, which is the first main result of this paper, describes the transfer of the Artinian property as well as the behavior of attached primes under a module-finite extension. 

\begin{theorem}\label{T:1} The following statements are true. 
\begin{itemize}
\item [{\rm (a)}]  Let	 $B$  be  an $S$-module. Then $B$ is Artinian as an $S$-module if and only if $B$ is   Artinian as an $R$-module defined by $\varphi$. In this case we have $\dim_{S}(B)=\dim_R(B)$ and	$$\Att_S(B)\subseteq\bigcup_{\frak p\in\Att_R(B)}\Assh_S(S/\frak pS).$$	
								
\item [{\rm (b)}] If $A$ is an Artinian $R$-module then $A\otimes_RS$ is an Artinian $S$-module. In this case we have $\dim_S(A\otimes_RS)=\dim_R(A)$ and  					
$$\Att_S(A\otimes_RS)\subseteq\bigcup_{\frak p\in\Att_R(A)}\Assh_S(S/\frak pS).$$			
		
\item [{\rm (c) }] $\Att_S(B)=\bigcup_{\frak p\in\Att_R(B)}\Assh_S(S/\frak pS)$ for every Artinian $S$-module $B$ if and only if the induced map $\varphi^*:\Spec(S)\to\Spec(R)$ sending $\frak P$ to $\frak P\cap R$ is a bijection.				
\end{itemize}
\end{theorem}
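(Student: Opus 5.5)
For (a), I would use Melkersson's criterion (Lemma \ref{L:1}) with the ideal $\frak n$ on the $S$-side and $\frak m$ on the $R$-side. Suppose $B$ is Artinian over $S$. Then $B$ is $\frak n$-torsion, since every element of an Artinian module over a local ring is killed by a power of the maximal ideal. Because $S/\frak mS$ has finite length (Lemma \ref{L:4}(a)), $\frak n^k\subseteq\frak mS$ for some $k$. Hence $B$ is $\frak m$-torsion as an $R$-module.

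Next consider $(0:_B\frak m)=(0:_B\frak mS)$. It is an $S$-submodule of $B$ killed by $\frak mS\supseteq\frak n^k$, so it is an Artinian $S$-module over the Artinian ring $S/\frak n^k$. It therefore has finite $S$-length, and by Lemma \ref{L:4}(b) it has finite $R$-length. By Lemma \ref{L:1}, $B$ is Artinian over $R$. Conversely, any descending chain of $S$-submodules is a chain of $R$-submodules, so $R$-Artinian implies $S$-Artinian.

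For the dimension equality, I note that $\Ann_RB=\Ann_SB\cap R$. Thus $R/\Ann_RB\hookrightarrow S/\Ann_SB$ is a module-finite extension, and Lemma \ref{L:3a}(a) gives $\dim_S(B)=\dim_R(B)$.

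For the attached primes, take $\frak P\in\Att_S(B)$. By Lemma \ref{L:2}(c), $\frak p:=\frak P\cap R\in\Att_R(B)$, and by Lemma \ref{L:3a}(c), $\frak P\in\Assh_S(S/\frak pS)$. This gives the inclusion.

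For (b), $A\otimes_RS$ is an $S$-module which is Artinian over $R$ by Lemma \ref{L:2a}, hence Artinian over $S$ by (a). Applying (a) to $B=A\otimes_RS$ gives $\dim_S(A\otimes_RS)=\dim_R(A\otimes_RS)$ and
$$\Att_S(A\otimes_RS)\subseteq\bigcup_{\frak p\in\Att_R(A\otimes_RS)}\Assh_S(S/\frak pS).$$
It remains to replace $A\otimes_RS$ by $A$ on the right-hand side. Lemma \ref{L:2a} gives $\Att_R(A\otimes_RS)=\Att_R(A)$. Lemma \ref{L:2}(a) then gives $\dim_R(A\otimes_RS)=\dim_R(A)$, since the dimension is the maximum of $\dim(R/\frak p)$ over the attached primes.

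For (c), I would prove both directions as follows.

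\emph{If $\varphi^*$ is bijective.} Let $\frak p\in\Att_R(B)$. By Lemma \ref{L:2}(c) there is $\frak P\in\Att_S(B)$ with $\frak P\cap R=\frak p$. Now take any $\frak Q\in\Assh_S(S/\frak pS)$. Then $\frak Q\supseteq\frak pS$, so $\frak Q\cap R\supseteq\frak p$. Also $\dim(S/\frak Q)=\dim(S/\frak pS)=\dim(R/\frak p)$, and $R/(\frak Q\cap R)\hookrightarrow S/\frak Q$ is module-finite, so Lemma \ref{L:3a}(a) gives $\dim(R/(\frak Q\cap R))=\dim(R/\frak p)$. Together with $\frak Q\cap R\supseteq\frak p$ this forces $\frak Q\cap R=\frak p$. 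Injectivity of $\varphi^*$ then gives $\frak Q=\frak P\in\Att_S(B)$, which yields the reverse inclusion.

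\emph{If equality holds for every Artinian $B$.} Suppose $\frak P\neq\frak P'$ both contract to $\frak p$. The plan is to find an Artinian $S$-module whose only attached prime is $\frak P$; I expect this step to be the main obstacle. My first choice is $B=H^{d}_{\frak n}(S/\frak P)$ with $d=\dim(S/\frak P)$. By Lemma \ref{L:2}(d), $\Att_S(B)=\{\frak P\}$, and then Lemma \ref{L:2}(c) gives $\Att_R(B)=\{\frak p\}$.

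By Lemma \ref{L:3a}(c), both $\frak P$ and $\frak P'$ lie in $\Assh_S(S/\frak pS)$. So the hypothesised equality would force $\frak P'\in\Att_S(B)=\{\frak P\}$, a contradiction. This shows $\varphi^*$ is injective. It is surjective by Remark \ref{R:3}(a), so $\varphi^*$ is a bijection.
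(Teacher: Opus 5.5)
Your proof is correct. For the transfer of Artinianness in (a), the inclusion of attached primes, part (b), and both directions of (c), it is essentially the paper's argument. In (c) you use the single module $H^d_{\mathfrak n}(S/\mathfrak P)$ where the paper compares $H^t_{\mathfrak n}(S/\mathfrak P)$ with $H^t_{\mathfrak n}(S/\mathfrak Q)$, but that is only a cosmetic economy.

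The one genuinely different step is $\dim_S(B)=\dim_R(B)$. The paper proves the two inequalities separately. It chooses an attached prime of maximal dimension on one side, transports it to the other side with Lemma \ref{L:2}(c), and compares dimensions through the module-finite extension $R/\mathfrak p\hookrightarrow S/\mathfrak P$ together with Lemma \ref{L:2}(a). You instead note that $\Ann_RB=\varphi^{-1}(\Ann_SB)$, so $R/\Ann_RB\hookrightarrow S/\Ann_SB$ is module-finite, and a single application of Lemma \ref{L:3a}(a) finishes. Your argument is shorter and holds for every $S$-module $B$, Artinian or not, since it never uses representability. The paper's version stays inside the attached-prime formalism used in the rest of the theorem, at the price of depending on the Artinian hypothesis through Lemma \ref{L:2}(a),(c).

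Two small points of presentation:
\begin{itemize}
\item To see that $B$ is $\mathfrak m$-torsion you need $\mathfrak mS\subseteq\mathfrak n$ (Remark \ref{R:3}(a)), so that $\mathfrak n^tb=0$ gives $\mathfrak m^tb=0$. The inclusion $\mathfrak n^k\subseteq\mathfrak mS$ that you cite there is the one needed afterwards, to regard $(0:_B\mathfrak mS)$ as a module over the Artinian ring $S/\mathfrak n^k$.
\item In the bijective direction of (c), the equality $\dim(S/\mathfrak pS)=\dim(R/\mathfrak p)$ should be justified, as in the proof of Lemma \ref{L:3a}(c). This uses $\mathfrak pS\cap R=\mathfrak p$ (Lemma \ref{L:3a}(b)) and then Lemma \ref{L:3a}(a).
\end{itemize}
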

		
\begin{proof} (a)  Assume that $B$ is Artinian as an $S$-module. We will use Melkersson's criterion (see Lemma \ref{L:1}) to show that $B$ is Artinian as an $R$-module. Let $b\in B.$ Since $B$ is an Artinian $S$-module, there exists $t\in\mathbb{N}$ such that $\frak n^tb=0.$  Since $\frak mS\subseteq \frak n$ by Remark \ref{R:3}(a), we have  $\frak m^tb=0$. Therefore, $B$ is $\frak m$-torsion as an $R$-module.  Now we prove that $(0:_B\frak m)$ is Artinian as an  $R$-module. Since $\frak mS\cap R=\frak m$ by Lemma \ref{L:3a}(b) and $S$ is finitely generated as an $R$-module, the  canonical injection map $R/\frak m\hookrightarrow S/\frak mS$ is a module-finite extension. Therefore, $\dim(S/\frak mS)=\dim(R/\frak m)=0$  by Lemma \ref{L:3a}(a). Hence $\dim _S(0:_B\frak mS)\leq\dim_S(S/\frak mS)=0.$ Since  $B$ is an Artinian $S$-module, $(0:_B\frak mS)$ is an Artinian  $S$-module. So, $\ell_S(0:_B\frak mS)<\infty$ by Lemma \ref{L:2}(b). Hence  $\ell_R(0:_B\frak m)<\infty$ by Lemma \ref{L:4}(b) and hence  $(0:_B\frak m)$ is an Artinian $R$-module. Therefore, $B$ is an Artinian $R$-module by Melkersson's criterion.	
		
Conversely, suppose that $B$ is Artinian as an $R$-module. Let $B_1\supseteq B_2\supseteq\ldots\supseteq B_t\ldots$ be a descending chain of $S$-submodules of $B.$ Then it is also a descending chain of $R$-submodules of $B.$ Since $B$ is Artinian as an $R$-module, this chain is stationary. Hence, $B$ is Artinian as an $S$-module.  
		
We prove that $\dim_S(B)=\dim_R(B)$ whenever $B$ is  Artinian as an $S$-module as well as an $R$-module. Set  $t=\dim_S(B)$ and $t'=\dim_R(B)$.	By Lemma \ref{L:2}(a), there exists  $\frak P\in\Att_{S}(B)$ such that $\dim(S/\frak P)=t.$ Set $\frak p =\frak P\cap R.$ Then $\frak p\in \Att_R(B)$ by Lemma \ref{L:2}(c).  Hence $\dim(R/\frak p)\leq t'$ by Lemma \ref{L:2}(a). Since $\frak p =\frak P\cap R$ and $S$ is finitely generated as an $R$-module, the canonical injection $R/\frak p\hookrightarrow S/\frak P$ is a module-finite extension. So, we get by Lemma \ref{L:3a}(a) that 
$$t=\dim(S/\frak P)=\dim(R/\frak p)\leq t'.$$
Conversely, it follows by Lemma \ref{L:2}(a) that there exists  an attached  prime $\frak p\in\Att_R(B)$ such that $\dim(R/\frak p)=t'.$ By Lemma \ref{L:2}(c), there exists  $\frak P\in\Att_S(B)$ such that $\frak p=\frak P\cap R.$ Hence the canonical injection $R/\frak p\hookrightarrow S/\frak P$ is a module-finite extension. Therefore,  we have by Lemma \ref{L:3a}(a) and Lemma \ref{L:2}(a) that  
$$t'=\dim(R/\frak p)=\dim(S/\frak P)\leq \dim_S(B)=t.$$
Therefore, $\dim_S(B)=\dim_R(B)$. 
			 
For the inclusion in statement (a), let $\frak P\in\Att_S(B).$ Set $\frak p=\frak P\cap R.$ Then $\frak p\in\Att_R(B)$ by Lemma \ref{L:2}(c) and  $\frak P\in\Assh_S(S/\frak pS)$  by Lemma \ref{L:3a}(c). Hence $$\Att_S(B)\subseteq\bigcup_{\frak p\in\Att_R(B)}\Assh_S(S/\frak pS).$$

(b) As $A$ is an Artinian $R$-module, we get by Lemma \ref{L:2a} that $A\otimes_RS$ is an Artinian $R$-module and $\Att_R(A\otimes_RS)=\Att_R(A)$. So, $A\otimes_RS$ is an Artinian $S$-module by assertion (a) and  $\dim_R(A\otimes_RS)=\dim_R(A)$  by Lemma \ref{L:2}(a).  Therefore, by applying assertion (a) for $S$-module $A\otimes_RS$, we have  $$\dim_S(A\otimes_RS)=\dim_R(A\otimes_RS)=\dim_R(A).$$ Moreover, by using the assertion (a)  again, we have
$$\Att_S(A\otimes_RS)\subseteq\bigcup_{\frak p\in\Att_R(A\otimes_RS)}\Assh_S(S/\frak pS)=\bigcup_{\frak p\in\Att_R(A)}\Assh_S(S/\frak pS).$$
	
(c) Suppose that $\Att_S(B)=\bigcup_{\frak p\in\Att_R(B)}\Assh_S(S/\frak pS)$ for every Artinian $S$-module $B$. By Remark \ref{R:3}(a), the induced map $\varphi^*:\Spec(S)\to\Spec(R)$ sending 
$\frak P$ to $\frak P\cap R$ is a surjection. Now we prove that $\varphi^*$ is injective. Let  
$\frak P, \frak Q\in\Spec(S)$ such that $\frak P\cap R=\frak Q\cap R=\frak p.$ Then $\frak P, \frak Q\in\Assh_S(S/\frak pS)$ by Lemma \ref{L:3a}(c). So we have $\dim(S/\frak P)=\dim(S/\frak Q)=\dim(S/\frak pS)$. Set $t=\dim(S/\frak pS).$ Set $B_1=H^t_{\frak n}(S/\frak P)$ and $B_2=H^t_{\frak n}(S/\frak Q).$ Then it follows by Lemma \ref{L:2}(d) that $B_1, B_2$ are Artinian $S$-modules and $\Att_S(B_1)=\{\frak P\}$,  $\Att_S(B_2)=\{\frak Q\}$. Note that $B_1, B_2$ are Artinian $R$-modules by assertion (a). So, we have by Lemma \ref{L:2}(c) that  
$\Att_R(B_1)=\{\frak P\cap R\}=\{\frak p\}$ and $\Att_R(B_2)=\{\frak Q\cap R\}=\{\frak p\}$. Therefore, we get by our assumption that 
$$\{\frak P\}=\Att_S(B_1)=\Assh_S(S/\frak pS)=\Att_S(B_2)=\{\frak Q\}.$$ 
Hence $\frak P=\frak Q$. It follows that $\varphi^*$ is a bijection.
	
Conversely, assume that $\varphi^*$ is a bijection. Let $B$ be an Artinian $S$-module. From the assertion (a),  it is enough to prove that $\Att_S(B)\supseteq \bigcup_{\frak p\in\Att_R(B)}\Assh_S(S/\frak pS)$. Let $\frak p\in \Att_R(B)$ and $\frak P\in\Assh_S(S/\frak pS)$. Then $\dim(S/\frak P)=\dim(S/\frak pS)$. Set $\frak q=\frak P\cap R.$ Then we have by Lemma \ref{L:3a}(c) that $\frak P\in\Assh(S/\frak qS).$ Hence $\dim(S/\frak P)=\dim(S/\frak qS)$. It follows that $\dim(S/\frak qS)=\dim(S/\frak pS).$ Note that $\frak pS\cap R=\frak p$ and $\frak qS\cap R=\frak q$ by Lemma \ref{L:3a}(b). Therefore, the ring injections $R/\frak p\hookrightarrow S/\frak pS$ and $R/\frak q\hookrightarrow S/\frak qS$ are module-finite extensions. Hence 
$$\dim (R/\frak q)=\dim(S/\frak qS)=\dim(S/\frak pS)=\dim (R/\frak p)$$ 
by Lemma \ref{L:3a}(a).	Note that $\frak pS\subseteq \frak P$. Hence  $\frak p=\frak pS\cap R\subseteq \frak P\cap R=\frak q.$   It follows that $\frak p=\frak q.$ Because $\frak p\in\Att_R(B)$, we get by Lemma \ref{L:2}(c) that there exists a prime ideal $\frak P'\in \Att_S(B)$ such that $\frak P'\cap R=\frak p.$ It follows that  $\frak P'\cap R=\frak p=\frak q=\frak P\cap R.$ Since $\varphi^*: \Spec(S)\to \Spec(R)$ is an injection by our assumption, we have $\frak P=\frak P'\in\Att_S(B).$ 
\end{proof}
	 
From the statement of Theorem \ref{T:1}(b), it is natural to ask the following question:  {\it Let $A$ be an $R$-module such that $A\otimes_RS$ is an Artinian $R$-module. Is $A$ an Artinian $R$-module?}
 
It should be mentioned that,  for a submodule $A'$ of $A$,  the natural homomorphisms $A\to A\otimes_RS$ and $A'\otimes_RS\to A\otimes_RS$ are not necessarily injective, see  Example \ref{E:1}. Although $A\otimes_RS$ is Artinian and it is a quotient of $A^t$ (where $t=\ell_R(S/\frak  mS))$, we do not know whether $A^t$ is Artinian. So, in the general case,  it seems difficult to determine if $A$ is Artinian or not.
 
 Here are some cases where the answer is positive. 
 
 \begin{lemma} \label{L:5} Let $A$ be an $R$-module such that $A\otimes_RS$ is an Artinian $R$-module. If one of the following conditions is satisfied then $A$ is Artinian.
 \begin{itemize}
 \item [{\rm (a)}]	$\varphi: R\hookrightarrow S$ is flat. 
 	
 \item [{\rm (b)}]   $R$ is a direct summand of $S$ as an $R$-module.
 	
 \item [{\rm (c)}]  $\frak mA=0$, i.e. $A$ is a vector space over $R/\frak m$. 
 \end{itemize}
 \end{lemma}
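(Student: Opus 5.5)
We treat the three hypotheses separately. In each case the goal is either to realize $A$ as an $R$-submodule of an Artinian $R$-module, or to reduce to a finite-length statement via Lemma \ref{L:4}.

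\textbf{Case (b).} This is the simplest case, so we do it first. Write $S\cong R\oplus C$ as $R$-modules. Then
$$A\otimes_RS\cong (A\otimes_RR)\oplus(A\otimes_RC)\cong A\oplus (A\otimes_RC).$$
Hence $A$ is isomorphic to a direct summand, and in particular to an $R$-submodule, of the Artinian $R$-module $A\otimes_RS$. Submodules of Artinian modules are Artinian, so $A$ is Artinian.

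\textbf{Case (a).} We would reduce to (b) by showing that a module-finite flat extension of local rings is faithfully flat and free.
\begin{itemize}
\item Since $S$ is finitely generated and flat over the local ring $R$, it is free, say $S\cong R^n$.
\item Since $\varphi$ is injective, $S\neq 0$, so $n\geq 1$.
\item Then $R$ is a direct summand of $S$, and case (b) applies.
\end{itemize}
Alternatively, one can argue directly. For $S\cong R^n$ with $n\geq1$ we have $A\otimes_RS\cong A^n$, which contains $A$ as a submodule. The one point to check is that flat plus finitely generated over a Noetherian local ring gives free; this is standard (finitely presented flat modules are projective, and projective modules over a local ring are free).

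\textbf{Case (c).} Suppose $\frak mA=0$, and set $k=R/\frak m$. Then
$$A\otimes_RS\cong A\otimes_k(S/\frak mS).$$
Here $S/\frak mS$ is a nonzero finite-dimensional $k$-vector space: it is finite-dimensional by Lemma \ref{L:4}(a), and nonzero because $\frak mS\subseteq\frak n\neq S$ by Remark \ref{R:3}(a). Write $t=\dim_k(S/\frak mS)\geq 1$, so that $A\otimes_RS\cong A^t$ as $R$-modules.

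An $R$-module annihilated by $\frak m$ is Artinian exactly when it has finite length, that is, finite $k$-dimension. Since $A^t$ is Artinian, we get $t\dim_kA<\infty$. As $t\geq1$, this forces $\dim_kA<\infty$, so $A$ is Artinian. Equivalently, one can note directly that $A$ embeds as a direct summand of $A^t$.

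The only mildly delicate steps are the freeness argument in (a) and the nonvanishing $S/\frak mS\neq0$ in (c). Both are routine.
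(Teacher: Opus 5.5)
Your proof is correct. For (b) and (c) you follow the paper's argument. In (c) you also supply the detail the paper omits, namely that $S/\mathfrak{m}S\neq 0$ (so $t\geq 1$) and that $A\otimes_k(S/\mathfrak{m}S)\cong A^t$; this nonvanishing is what makes the final conclusion go through.

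For (a) your route differs from the paper's. The paper takes a non-stationary descending chain $A_1\supset A_2\supset\cdots$ in $A$ and tensors it with $S$, using flatness so that each $A_i\otimes_RS$ is a submodule of $A\otimes_RS$. To conclude that the tensored chain is still non-stationary one needs $(A_i/A_{i+1})\otimes_RS\neq 0$, which is faithful flatness. The paper leaves this implicit; it holds because $\varphi$ is a flat local homomorphism.

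You instead use that a finitely generated flat module over a local ring is free, so $S\cong R^n$ with $n\geq 1$. This shows (a) is a special case of (b), gives the explicit description $A\otimes_RS\cong A^n$, and makes faithful flatness automatic. The paper's chain argument, once faithful flatness is made explicit, would work for any faithfully flat extension with no finiteness hypothesis. Your argument instead exploits module-finiteness to obtain that stronger structural statement.
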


\begin{proof} (a) If $A$ is not Artinian then there exists a descending chain $A_1\supset A_2\supset \ldots $ of submodules of $A$ which is not stationary. As $S$ is flat over $R$, $A_1\otimes_RS\supset A_2\otimes_RS\supset \ldots $ is a descending chain of submodules of $A\otimes_RS$ which is not stationary. This is impossible.
 		
(b)  Since $R$ is a direct summand of $S$, we can consider $A$ as a direct summand of $A\otimes_RS$, so $A$ is Artinian.
 		
(c) Set $k=R/\frak m$. Since $\frak mA=0,$ we have $A\otimes_RS\cong A\otimes_kS/\frak mS$.  Hence $A$ is Artinian.
 \end{proof}

 The notion of Cohen-Macaulay module is a central object of research in commutative algebra.  There are at least two different ways to generalize this notion; one is the notion of generalized Cohen-Macaulay module introduced by Cuong-Schenzel-Trung \cite{CST} and the other one is the notion of sequentially Cohen-Macaulay module  introduced by  R. P. Stanley \cite{St} in the graded setting  and by P. Schenzel \cite{Sch} in the local setting. In a natural way, the notions of sequentially generalized Cohen-Macaulay module and  sequentially $1$-Cohen-Macaulay module were defined respectively in \cite{CN} and \cite{LLN}. 
 	
Let $M$ be a finitely generated $R$-module of dimension $d$. Set $\frak a_i(M):=\Ann_RH^i_{\frak m}(M)$ for $i\leq d$ and $\frak a(M)=\frak a_0(M)\ldots \frak a_{d-1}(M).$ If we stipulate the dimension of zero module to be $-1$, then $M$ is Cohen-Macaulay (resp. generalized Cohen-Macaulay, $1$-Cohen-Macaulay) if and only if $\dim (R/\frak a(M))=-1$ (resp. $\dim (R/\frak a(M))\leq 0$, $\dim (R/\frak a(M))\leq 1$). 	
Let $H^0_{\frak m}(M)=D_t\subset \ldots \subset D_1\subset D_0=M$ be the dimension filtration of $M$, i.e. $D_{i+1}$ is the largest submodule of $M$ of dimension less than $\dim_R(D_i)$ for all $i<t$, see \cite{Sch}. Note that the dimension filtration of $M$ always exists uniquely.  We say that $M$ is {\it sequentially Cohen-Macaulay} (resp. {\it sequentially generalized Cohen-Macaulay}, {\it sequentially $1$-Cohen-Macaulay}) if each quotient $D_i/D_{i+1}$ is Cohen-Macaulay (resp. generalized Cohen-Macaulay, $1$-Cohen-Macaulay).  The structure of sequentially  Cohen-Macaulay modules,  sequentially generalized Cohen-Macaulay modules and sequentially $1$-Cohen-Macaulay modules  are  extensively studied, see  \cite{CSSS, CN, CST, GN, LLN, LN, NDC, Sch, St, TPDA}. 
 	
It is useful to know whether a local ring is a quotient of a Cohen-Macaulay local ring because this relates closely to various subjects such as Macaulayfication, Faltings' Annihilator Theorem, Shifted Principles for local cohomology modules, see \cite{CC, Fa, NQ}. 
 	
Next, we apply Theorem \ref{T:1} to describe the behavior of local cohomology modules as well as the structure of certain finitely generated modules under module-finite extensions. We also show that the property of being a quotient of a Cohen-Macaulay local ring can  transfer between $R$ and $S.$ 
 		 	
\begin{theorem}\label{T:2} Let $N$ be a finitely generated $S$-module. The following statements hold true. 
\begin{itemize}			
\item [{\rm (a)}] For each integer $i$, there is an isomorphism $H^i_{\frak n}(N)\cong  H^i_{\frak m}(N)$ of Artinian $R$-modules. Moreover, if $H^i_{\frak n}(N)\neq 0$ then  
$\dim_S(H^i_{\frak n}(N))=\dim_R(H^i_{\frak m}(N)).$
\item [{\rm (b)}] $N$ is Cohen-Macaulay (resp. generalized Cohen-Macaulay, $1$-Cohen-Macaulay,  sequentially  Cohen-Macaulay, sequentially  generalized Cohen-Macaulay, sequentially  $1$-Cohen-Macaulay)  as an $S$-module if and only if so is $N$ as an $R$-module.
\item [{\rm (c)}]  $R$ is a quotient of a Cohen-Macaulay local ring if and only if so is  $S$.
\end{itemize} 		
\end{theorem}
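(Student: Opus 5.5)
For (a) I will use the Independence Theorem for local cohomology. Because $S$ is module-finite over $R$, the ideal $\frak mS$ is $\frak n$-primary. By Lemma \ref{L:3a}(a), $\dim(S/\frak mS)=0$, so $\sqrt{\frak mS}=\frak n$. Hence $H^i_{\frak n}(N)\cong H^i_{\frak mS}(N)$. The Independence Theorem gives $H^i_{\frak mS}(N)\cong H^i_{\frak m}(N)$ as $R$-modules, where $N$ on the right is viewed as an $R$-module. By Lemma \ref{L:2}(d), $H^i_{\frak n}(N)$ is an Artinian $S$-module. Theorem \ref{T:1}(a) then shows it is Artinian as an $R$-module, with $\dim_S(H^i_{\frak n}(N))=\dim_R(H^i_{\frak n}(N))=\dim_R(H^i_{\frak m}(N))$ when it is nonzero.

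For (b) I first note that $\dim_S(N)=\dim_R(N)$. This follows by applying Lemma \ref{L:3a}(a) to the module-finite extension $R/\Ann_RN\hookrightarrow S/\Ann_SN$; alternatively, take $i=d$ in (a) and use Grothendieck nonvanishing. Write $\frak a_i^S(N)=\Ann_SH^i_{\frak n}(N)$. Then $\frak a_i^R(N)=\frak a_i^S(N)\cap R$, so (a) gives $\dim(S/\frak a_i^S(N))=\dim(R/\frak a_i^R(N))$, with value $-1$ exactly when the module vanishes. Since $\dim(R/\frak a)=\max_i\dim(R/\frak a_i)$, the same holds for $\frak a(N)=\frak a_0(N)\cdots\frak a_{d-1}(N)$. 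This settles the Cohen--Macaulay, generalized Cohen--Macaulay and $1$-Cohen--Macaulay cases.

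For the sequential notions I will show that the dimension filtrations over $R$ and over $S$ coincide. An $S$-submodule of $N$ is an $R$-submodule, and $S$- and $R$-dimensions agree by the remark above. Conversely, let $D'$ be the largest $R$-submodule of $D_i$ with $\dim_R<\dim D_i$. Then $SD'$ is a finitely generated $R$-module, being an image of $S\otimes_RD'$. Moreover $\Ann_R(SD')\supseteq\Ann_R(D')$, so $\dim_R(SD')\le\dim_R(D')$ and hence $SD'=D'$. So $D'$ is an $S$-submodule, and maximality transfers in both directions. The quotients $D_i/D_{i+1}$ are therefore the same modules, and the previous paragraph applies to each. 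This filtration comparison is the delicate point of (b).

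For (c) I will use the characterization of Kawasaki: a local ring $T$ is a quotient of a Cohen--Macaulay local ring if and only if $T$ is universally catenary and all formal fibers of $T$ are Cohen--Macaulay. I expect verifying that these two properties transfer along module-finite extensions to be the main obstacle.

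For $R\Rightarrow S$ there is a more direct route. If $R=T/I$ with $T$ Cohen--Macaulay, write $S=R[s_1,\dots,s_n]$ with each $s_j$ integral. Then $S$ is a quotient of $T[x_1,\dots,x_n]$ localized at a maximal ideal, and this ring is Cohen--Macaulay.

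For $S\Rightarrow R$ I would pass to completions. We have $\widehat S=S\otimes_R\widehat R$, which is module-finite over $\widehat R$. The formal fibers of $R$ at $\frak p$ embed, via the faithfully flat and finite map $\widehat R\otimes k(\frak p)\to\widehat S\otimes k(\frak p)$, into the fibres $\widehat S\otimes_S k(\frak P)$ over the primes $\frak P$ lying above $\frak p$. Cohen--Macaulayness then descends by (b) applied to these module-finite extensions, after localization. Universal catenarity descends by Ratliff's theorem, since $R$ is universally catenary if $R/\frak p$ is formally equidimensional and these are dominated by the domains $S/\frak P$. If this route is too heavy, the fallback is Kawasaki's other criterion: $T$ is a quotient of a Cohen--Macaulay ring if and only if every finitely generated $T$-module has a $p$-standard system of parameters. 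This is equivalent to conditions on the annihilators $\frak a(M)$, which via (a) and (b) transfer between $R$ and $S$ by the same dimension comparisons.
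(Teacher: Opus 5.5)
Your (a) follows the paper: Theorem \ref{T:1}(a) plus the Independence Theorem with $\sqrt{\frak mS}=\frak n$. Your Cohen--Macaulay, generalized and $1$-Cohen--Macaulay cases of (b) also match the paper. For the sequential notions, the paper just quotes from the literature that the dimension filtrations over $R$ and $S$ coincide. Your argument proves this directly and correctly: $\Ann_R(SD')=\Ann_R(D')$, so $SD'=D'$ by maximality. That makes (b) self-contained.

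For (c) you take a genuinely different route. The paper uses the Cuong--Cuong criterion: $R/\Ann_R(M)$ is a quotient of a Cohen--Macaulay local ring iff $M$ has a $p$-standard system of parameters. It applies this to the single faithful $R$-module $S$. By (a), the bounds $\dim H^i\le i$ from Lemma \ref{L:2}(e) pass between $R$ and $S$ for each $S$-module $S/(x_{j+1},\dots,x_n)S$. Prime avoidance then builds a $p$-standard system of parameters inductively. The argument is symmetric and needs nothing beyond (a).

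Your $R\Rightarrow S$ argument (localize $T[x_1,\dots,x_n]$ at the preimage of $\frak n$) is correct and more elementary than the paper's. Your $S\Rightarrow R$ argument via Kawasaki's characterization also works and shows which invariants descend. It costs the hard direction of Kawasaki's theorem and both directions of Ratliff's theorem, and three points need repair or expansion.

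\begin{itemize}
\item Cohen--Macaulayness of formal fibres does not descend ``by (b)''. Part (b) never moves the Cohen--Macaulay property from an extension ring down to the base, and Example \ref{E:2}(b) shows this property fails to descend along module-finite extensions in general. What works is flatness. Since $\widehat S\otimes_S k(\frak P)\cong(\widehat R\otimes_R k(\frak p))\otimes_{k(\frak p)}k(\frak P)$ is free of finite rank over the formal fibre of $R$ at $\frak p$, Cohen--Macaulayness descends along the induced flat local maps.
\item Passing through $\widehat S\otimes_R k(\frak p)$ is slightly off. That ring is a product of nilpotent thickenings $\widehat S\otimes_S(S_{\frak P}/\frak pS_{\frak P})$ of the fibres of $S$, not the fibres themselves; the isomorphism above bypasses it.
\item ``Dominated by $S/\frak P$'' should be spelled out. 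The map $\widehat{R/\frak p}\hookrightarrow\widehat{S/\frak P}$ is finite and injective. Hence each minimal prime of $\widehat{R/\frak p}$ is contracted from a minimal prime of $\widehat{S/\frak P}$ of the same dimension, which gives formal equidimensionality of $R/\frak p$.
\end{itemize}

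Your fallback, as phrased (every finitely generated module has a $p$-standard system of parameters), would not transfer from $S$ to $R$, since $R$-modules are not $S$-modules. What makes it work is exactly the paper's choice of the faithful $R$-module $S$.
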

 	
\begin{proof} (a) Since $\varphi: R\hookrightarrow S$ is a module-finite extension, $N$ is finitely generated as an $R$-module defined by $\varphi$. Let $i$ be an integer. Because  $H^i_{\frak n}(N)$ is an Artinian $S$-module, we get by Theorem \ref{T:1}(a) that $H^i_{\frak n}(N)$ is Artinian as an $R$-module and  $\dim_S(H^i_{\frak n}(N))=\dim_R(H^i_{\frak n}(N))$. As $\ell_R(S/\frak mS)<\infty$ by Lemma \ref{L:4}(a), we have $\ell_S(S/\frak mS)<\infty$, i.e. $\dim(S/\frak mS)=0.$ So,  we get by Independence Theorem \cite[Theorem 4.2.1]{BS} that there  is an isomorphism  $$H^i_{\frak n}(N)=H^i_{\frak mS}(N)\cong  H^i_{\frak m}(N)$$  of $R$-modules.  If $H^i_{\frak n}(N)\neq 0$ then this isomorphism gives
$$\dim_S(H^i_{\frak n}(N))=\dim_R(H^i_{\frak n}(N))=\dim_R(H^i_{\frak m}(N)).$$
 		 
(b) We get by assertion (a) that $H^i_{\frak n}(N)\neq 0$ if and only if $H^i_{\frak m}(N)\neq 0$ for every integer $i$. Hence  $\dim_S(N)=\dim_R(N)$ and  $\depth_S(N)=\depth_R(N)$. So, $N$ is Cohen-Macaulay as an $S$-module if and only if $N$ is Cohen-Macaulay as an $R$-module.	
 		 
It follows by assertion (a) that  $\dim_S(H^i_{\frak n}(N))\leq 0$ for all $i<\dim_S(N)$ if and only if $\dim_R(H^i_{\frak m}(N))\leq 0$ for all $i<\dim_R(N).$ It means that $N$ is generalized Cohen-Macaulay as an $S$-module if and only if $N$ is generalized Cohen-Macaulay as an $R$-module. By the same arguments, it follows that  $N$ is $1$-Cohen-Macaulay as an $S$-module if and only if $N$ is $1$-Cohen-Macaulay as an $R$-module.
 		 		 
Next, let $H^0_{\frak n}(N)=D_t\subset \ldots \subset D_1\subset D_0=N$ be the dimension filtration of $N$ as an $S$-module.  Since $\varphi: R\hookrightarrow S$ is  a module-finite extension, we get by \cite[Theorem 3.3]{TPDA} that this filtration is also the dimension filtration of $N$ as an $R$-module defined by $\varphi$. Therefore, $N$ is  sequential Cohen-Macaulay as an $S$-module if and only if each $D_i/D_{i+1}$ is Cohen-Macaulay as an $S$-module, if and only if  each $D_i/D_{i+1}$ is Cohen-Macaulay as an $R$-module, if and only if $N$ is sequentially Cohen-Macaulay as an $R$-module. The rest of the statement (a) follows by the similar arguments.
 		
(c) We first recall a criterion of quotients of Cohen-Macaulay local rings in \cite{CC}. Let $M$ be a finitely generated $R$-module, $\dim_R(M)=d$.  Following Cuong \cite{C}, a system of parameters $x_1, \ldots , x_d$ of $M$ is  a {\it $p$-standard system of parameters}  of $M$ if $x_d\in \frak a(M)$ and $x_i\in\frak a(M/(x_{i+1}, \ldots , x_d)M)$ for all $i$. Here, $\frak a(M):=\Ann_R(H^0_{\frak m}(M))\ldots \Ann_R(H^{d-1}_{\frak m}(M)).$  By \cite[Theorems 1.2, 1.3]{CC}, the ring $R/\Ann_R(M)$ is a quotient of a Cohen-Macaulay local ring if and only if $M$ admits  a $p$-standard system of parameters.
 		
 We prove the statement (c). Let $\dim (R)=n$. Then $\dim (S)=n$.  To avoid confusion, set 
\begin{align}\frak a_S(S):&=\Ann_S(H^0_{\frak n}(S))\ldots \frak \Ann_S(H^{n-1}_{\frak n}(S))\notag\\
\frak a_R(S):&=\Ann_R(H^0_{\frak m}(S))\ldots \frak \Ann_R(H^{n-1}_{\frak m}(S)).\notag\end{align} 
Then we have by assertion (a) that $\dim_S(H^i_{\frak n}(S))=\dim_R(H^i_{\frak m}(S))$ for all integers $i<n.$ Therefore, $\dim (S/\frak a_S(S))=\dim (R/\frak a_R(S)).$
 	
Suppose that $R$ is a quotient of a Cohen-Macaulay local ring. Then $\dim_R(H^i_{\frak m}(S))\leq i$ for all $i$ by Lemma \ref{L:2}(a),(e). Hence, $\dim (S/\frak a_S(S))<n.$ By Prime Avoidance, there exists a parameter $s_n$ of $S$ such that $s_n\in \frak a_S(S)$. By the same reasons,  $\dim (S/\frak a_S(S/s_nS))<n-1.$ So, there exists a parameter $s_{n-1}$ of $S/s_nS$ such that $s_{n-1}\in \frak a_S(S/s_nS)$. Continue this process, we get a $p$-standard system of parameters $s_1, \ldots , s_n$ of $S$. Hence, the ring $S$ is a quotient of a Cohen-Macaulay local ring.
 	
Conversely, suppose that $S$ is a quotient of a Cohen-Macaulay local ring. By Lemma \ref{L:2}(a),(e), we have $\dim_S(H^i_{\frak n}(S))\leq i$ for all $i$. Hence $\dim (R/\frak a_R(S))<n.$ So, by the same arguments as in the above, $R$-module $S$ admits a $p$-standard system of parameters. Hence $R/\Ann_R(S)$ is a quotient of a Cohen-Macaulay local ring. Note that $\Ann_R(S)=0.$ So, $R$ is a quotient of a Cohen-Macaulay local ring.  		
\end{proof}
 	
We continue with an application of Theorem \ref{T:2} in  studying the structure of the idealization introduced by Nagata \cite{Na}.  Let $M$ be a finitely generated $R$-module. We provide a multiplication on the additive group $R\oplus M$    $$(a, x).(b, y) = (ab, ay+bx)$$ for all $(a, x), (b, y)\in R\oplus M$, then $R\oplus M$ forms a Noetherian local ring with the unique maximal ideal $\mathfrak{m} \oplus M$ and $\dim(R\ltimes M)=\dim(R)$. This local ring is called the {\it idealization} of $M$ over $R$ and denoted by $R\ltimes M$. There is a canonical   projection $R\ltimes M \to R$ sending $(a,x)$ to $a$ and a canonical  injection $\sigma: R\hookrightarrow R\ltimes M$ sending $a$ to $(a,0).$ These maps are local homomorphisms.  We note that the canonical  injection $\sigma$  is a module-finite extension. The idealization has  important applications in commutative algebra, so its structure has attracted the interest of mathematicians, for example see \cite{AW, CNN, GoKu, Na}.
	
\begin{corollary} \label{C:1}  Let $M\neq 0$ be a finitely generated $R$-module. Let $S=R\ltimes M$ be the idealization of $M$ over $R$. Let $\dim (R)=n$. Then   
\begin{itemize}
\item[\rm{(a)}] $S$ is a Cohen-Macaulay ring if and only if $S$ is a Cohen-Macaulay $R$-module, if and only if   both $R, M$ are Cohen-Macaulay of dimension $n$.
				
\item[\rm{(b)}] $S$ is a generalized Cohen-Macaulay ring if and only if $S$ is a generalized Cohen-Macaulay $R$-module, if and only if  $R$ is generalized Cohen-Macaulay and either $\dim_R(M)\leq 0$ or  $M$ is generalized Cohen-Macaulay of dimension $n$.
				
\item[\rm{(c)}] $S$ is an $1$-Cohen-Macaulay ring if and only if $S$ is an $1$-Cohen-Macaulay $R$-module, if and only if  $R$ is $1$-Cohen-Macaulay and either $\dim_R(M)\leq 1$ or  $M$ is $1$-Cohen-Macaulay of dimension $n$. 							
\end{itemize}
\end{corollary}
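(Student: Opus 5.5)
The first equivalence in each of (a), (b), (c) is Theorem \ref{T:2}(b), applied to the finitely generated $S$-module $N=S$ and the module-finite extension $\sigma: R\hookrightarrow S=R\ltimes M$. So it remains to describe $S$ as an $R$-module. As $R$-modules we have $S\cong R\oplus M$, since $\sigma$ makes $R$ act on $R\oplus M$ componentwise. Local cohomology commutes with direct sums, so for every $i$
$$H^i_{\frak m}(S)\cong H^i_{\frak m}(R)\oplus H^i_{\frak m}(M),$$
and therefore $\Ann_R H^i_{\frak m}(S)=\Ann_R H^i_{\frak m}(R)\cap \Ann_R H^i_{\frak m}(M)$. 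Also $\dim_R(S)=\max\{n,\dim_R M\}=n$, because $M$ is finitely generated over $R$. With the conventions of the paper, $\dim_R(H)=\dim R/\Ann_R H$, and the dimension of the zero module is $-1$. Hence
$$\dim_R H^i_{\frak m}(S)=\max\{\dim_R H^i_{\frak m}(R),\ \dim_R H^i_{\frak m}(M)\}.$$
Since $\frak a(S)$ is a product of the ideals $\frak a_i(S)$, we get $\dim R/\frak a(S)=\max_{i<n}\dim_R H^i_{\frak m}(S)$. So the condition $\dim R/\frak a(S)\le k$ for $k=-1,0,1$ becomes: $\dim_R H^i_{\frak m}(R)\le k$ and $\dim_R H^i_{\frak m}(M)\le k$ for all $i<n$.

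The condition on $R$ is exactly that $R$ is Cohen-Macaulay (for $k=-1$), generalized Cohen-Macaulay ($k=0$), or $1$-Cohen-Macaulay ($k=1$). For $M$, let $d=\dim_R M\le n$; the main step is to translate the condition ``$\dim_R H^i_{\frak m}(M)\le k$ for all $i<n$'' into the stated dichotomy. If $d=n$, it is exactly the definition of $M$ being Cohen-Macaulay, generalized Cohen-Macaulay, or $1$-Cohen-Macaulay of dimension $n$. If $d<n$, the index range $i<n$ includes $i=d$, and $H^d_{\frak m}(M)\neq 0$ with $\dim_R H^d_{\frak m}(M)=d$ by Lemma \ref{L:2}(d), since its attached primes are the primes of $\Assh_R(M)$. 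So the condition forces $d\le k$. Conversely, if $d\le k$, then every $H^i_{\frak m}(M)$ has annihilator-dimension at most $\dim_R M=d\le k$, because $\Ann_R M\subseteq\Ann_R H^i_{\frak m}(M)$; so the condition holds.

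For $k=-1$, the case $d<n$ would require $d\le -1$, that is $M=0$, which is excluded by hypothesis. This gives (a): $M$ must be Cohen-Macaulay of dimension $n$. For $k=0,1$ we obtain the alternatives ``$\dim_R M\le k$ or $M$ is (generalized, resp.\ $1$-)Cohen-Macaulay of dimension $n$'' in (b) and (c). The only delicate point is the index range. The definition of $\frak a(S)$ uses $i<\dim_R S=n$, not $i<\dim_R M$, and this is exactly what produces the extra case $d\le k$. One also has to keep track of the convention $\dim 0=-1$ when some $H^i_{\frak m}(M)$ vanish.
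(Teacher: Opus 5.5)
Your proposal is correct and follows essentially the same route as the paper. Both arguments invoke Theorem~\ref{T:2}(b) for the equivalence between the ring and $R$-module properties, and both split $H^i_{\frak m}(S)\cong H^i_{\frak m}(R)\oplus H^i_{\frak m}(M)$ via $S\cong R\oplus M$. Both then use that $H^d_{\frak m}(M)\neq 0$ has dimension $d=\dim_R M$ (Lemma~\ref{L:2}) to handle the index range $i<n$. Your version spells out the case $d<n$ more carefully than the paper, including the converse via $\Ann_R M\subseteq \Ann_R H^i_{\frak m}(M)$.
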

		
\begin{proof} We have $\dim(S)=\dim(R)=n$. Set $\dim_R(M)=d.$ Note that $0\leq d\leq n$ and  $S=R\oplus M$ as $R$-modules.  So, we have  an isomorphism of $R$-modules for each integer $i\leq n$ $$H_{\frak m}^i(S)\cong H^i_{\frak m}(R)\oplus H^i_{\frak m}(M).$$ 	
 Note that $H^d_{\frak m}(M)\neq 0$ and $\dim_RH^d_{\frak m}(M)=d$ by Lemma \ref{L:2}. So,  by the above isomorphism, $S$ is a Cohen-Macaulay $R$-module if and only if $H^i_{\frak m}(R)=0$ and $H^i_{\frak m}(M)=0$ for all $i<n$, if and only if  $R$ and $ M$ are Cohen-Macaulay $R$-modules of dimension $n$. Next, $S$ is a generalized Cohen-Macaulay  $R$-module if and only if $\dim_R(H^i_{\frak m}(R))\leq 0$ and $\dim_R(H^i_{\frak m}(M))\leq 0$  for all $i<n$, if and only if $R$ is generalized Cohen-Macaulay  and either $\dim_R(M)\leq 0$ or  $M$ is generalized Cohen-Macaulay of dimension $n$. Similarly,  
  $S$ is an $1$-Cohen-Macaulay $R$-module if and only if  $\dim_R(H^i_{\frak m}(R))\leq 1$ and $\dim_R(H^i_{\frak m}(M))\leq 1$ for all $i<n$, if and only if $R$ is $1$-Cohen-Macaulay and either  $\dim_R(M)\leq 1$ or  $M$ is $1$-Cohen-Macaulay  of dimension $n$. The rest  statement of (a), (b), (c) follows by  Theorem \ref{T:2}(b). 
\end{proof}

\begin{corollary} \label{C:2} Let $M$ be a finitely generated $R$-module. Then $R\ltimes M$ is sequentially  Cohen-Macaulay (resp. sequentially  generalized Cohen-Macaulay, sequentially  $1$-Cohen-Macaulay) as a ring  if and only if  so is $R\ltimes M$ as an $R$-module, if and only if so are $R$ and $M$.  
\end{corollary}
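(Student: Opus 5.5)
The first equivalence will come straight from Theorem \ref{T:2}(b) applied to $N=S=R\ltimes M$ with the canonical module-finite extension $\sigma:R\hookrightarrow R\ltimes M$. Each of the three sequential properties holds for $S$ as an $S$-module (i.e. as a ring) if and only if it holds for $S$ as an $R$-module. So the real content is the second equivalence: $S=R\oplus M$ as $R$-modules is sequentially Cohen-Macaulay (resp. sequentially generalized, sequentially $1$-Cohen-Macaulay) if and only if both $R$ and $M$ are.

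To handle the direct sum, I would first show that the dimension filtration behaves additively. Let $\{D_i(R)\}$ and $\{D_j(M)\}$ be the dimension filtrations of $R$ and $M$. For each integer $k\ge -1$, let $F_k(X)$ denote the largest submodule of $X$ of dimension $\le k$; it exists because $X$ is Noetherian and the sum of two submodules of dimension $\le k$ again has dimension $\le k$. The $D_i$ are exactly the distinct $F_k$, listed in decreasing order. For $X=R\oplus M$ I would check that $F_k(R\oplus M)=F_k(R)\oplus F_k(M)$. The inclusion $\supseteq$ is clear. For $\subseteq$, the projections of a submodule $U$ of dimension $\le k$ to $R$ and to $M$ are quotients of $U$, so they also have dimension $\le k$, and $U$ is contained in the direct sum of these projections. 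Consequently each quotient of consecutive members of the dimension filtration of $S$ has the form
$$F_k(S)/F_{k'}(S)\cong F_k(R)/F_{k'}(R)\oplus F_k(M)/F_{k'}(M),$$
where $k'<k$ are consecutive values at which the filtration of $S$ jumps.

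The key lemma is then the following. A module $F_k(X)/F_{k'}(X)$ with $F_{k'}(X)$ maximal of dimension $\le k'$ is either zero or of dimension exactly $k$. Moreover, the jumps of the filtration of $S$ form the union of the jumps for $R$ and for $M$. So each summand above is either zero or of dimension $k$, and the nonzero summands are precisely the filtration quotients of $R$ and of $M$ (when a jump belongs only to $R$, say, the $M$-summand is zero). For finitely generated modules of the same dimension $k$, $H^i_{\frak m}(A\oplus B)\cong H^i_{\frak m}(A)\oplus H^i_{\frak m}(B)$ and the annihilator of a direct sum is the intersection of the annihilators. Hence $\dim_R(H^i_{\frak m}(A\oplus B))=\max\{\dim_R(H^i_{\frak m}(A)),\dim_R(H^i_{\frak m}(B))\}$ for $i<k$, so $A\oplus B$ is Cohen-Macaulay (resp. generalized, $1$-Cohen-Macaulay) if and only if both $A$ and $B$ are. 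This is the same argument as in Corollary \ref{C:1}. Applying it to every quotient gives the claim.

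I expect the main obstacle to be the bookkeeping showing that every filtration quotient of $S$ is an equidimensional-type direct sum whose nonzero summands are exactly the filtration quotients of $R$ and $M$ (including the degenerate case $M=0$ and the case where only one summand jumps). Once the identity $F_k(R\oplus M)=F_k(R)\oplus F_k(M)$ is established, this follows directly.
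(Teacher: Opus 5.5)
Your proposal is correct and is essentially the paper's argument: you reduce to the $R$-module $R\oplus M$ by Theorem \ref{T:2}(b), observe that the dimension filtration of $R\oplus M$ splits as the direct sum of those of $R$ and $M$, and compare the filtration quotients summandwise via $H^i_{\frak m}(A\oplus B)\cong H^i_{\frak m}(A)\oplus H^i_{\frak m}(B)$. The only difference is organizational: you prove $F_k(R\oplus M)=F_k(R)\oplus F_k(M)$ for every $k$ at once instead of peeling off the top layer $S_1$ and inducting on $n$, which justifies the splitting the paper only says ``we can check'' and avoids its tacit need to run the induction over arbitrary pairs of modules rather than over $R$ and $M$.
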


\begin{proof}  	Set $n=\dim (R)$, $d=\dim_R(M)$ and $S=R\ltimes M$. Then $d\leq n.$ Note that $S=R\oplus M$ as $R$-modules and $\dim_R(R\oplus M)=n$. Therefore, by Theorem \ref{T:2}(b), it is enough to prove that $R\oplus M$ is a sequentially  Cohen-Macaulay (resp. sequentially  generalized Cohen-Macaulay, sequentially  $1$-Cohen-Macaulay)  $R$-module if and only if so are $R$ and $M$. We  proceed by induction on $n$. If  $n=0$ then $R, M, R\oplus M$ are Cohen-Macaulay, so the result  is clear. 
		
Let $n\geq 1.$ Let $S_1$ be the largest $R$-submodule of $R\oplus M$ of dimension less than $n$.  Let $M_1$ be the largest submodule of $M$  of dimension less than $d$ and  $R_1$ the largest $R$-submodule of $R$  of dimension less than $n$.  If $d=n$, we can check that $S_1=R_1\oplus M_1$, therefore $S/S_1\cong R/R_1\oplus M/M_1$. If $d<n$ then $S_1=R_1\oplus M$, so we get   $S/S_1\cong R/R_1$. As $\dim_R(S_1)<n,$ we get by induction that $S_1$ is a sequentially  Cohen-Macaulay (resp. sequentially  generalized Cohen-Macaulay, sequentially  $1$-Cohen-Macaulay) $R$-module if and only if so are $R_1$ and $M_1$. 
		
If $d<n$ then $S/S_1$ is a Cohen-Macaulay (resp. generalized Cohen-Macaulay, $1$-Cohen-Macaulay) $R$-module if and only if so is  $R/R_1$.  So, the result follows.	
		
Assume that $d=n$. Note that $\dim_R(R/R_1)=\dim_R(M/M_1)=n$. Moreover, $$H_{\frak m}^i(S/S_1)\cong H^i_{\frak m}(R/R_1)\oplus H^i_{\frak m}(M/M_1)$$ for all $i<n$. It follows that $S/S_1$ is a Cohen-Macaulay (resp. generalized Cohen-Macaulay, $1$-Cohen-Macaulay) $R$-module if and only if so are $R/R_1$ and $M/M_1$. Thus, we also get the result in this case. 		
\end{proof}

We mention that  the characterization for the sequentially Cohen-Macaulayness in Theorem \ref{T:2}(b) and Corollary \ref{C:2} is already provided in \cite{TPDA}. 
		
Let P be one of the properties: Cohen-Macaulayness, generalized Cohen-Macaulayness, $1$-Cohen-Macaulayness,  sequentially  Cohen-Macaulayness, sequentially  generalized Cohen-Macaulayness, sequentially  $1$-Cohen-Macaulayness. For a finitely generated $R$-module $M$, it is natural to ask whether the property P on $S$-module $M\otimes_RS$ is inherited by $M$, and conversely whether the property P lifts from $M$ to $M\otimes_RS$. The answer is negative even in the case where $M=R.$ Below we use  Corollaries \ref{C:1}, \ref{C:2} to clarify this situation.
		
\begin{example} \label{E:2} {\rm Let $P$ be a property as in the above. Then}
\begin{itemize}
\item[\rm{(a)}] {\rm There exists a module-finite extension $R\hookrightarrow S$ of local rings such that  $R$ satisfies the property P, but $S$ does not satisfy the property P.}
\item[\rm{(b)}] {\rm There exists a module-finite extension $R\hookrightarrow S$ of local domains such that $S$ satisfies the property P, but $R$ does not satisfy the property P.}
\end{itemize}

\begin{proof} (a) Let $d\geq 1$ be an integer. Let $R=K[[x_1,\ldots,x_d]]$ be the formal power series ring of $d$ variables over a field $K$. Then $R$ is Cohen-Macaulay of dimension $d$.	We set $M_i=R/(x_1, \ldots , x_i)R$ and $M'_i=(x_1, \ldots , x_i)R$ for $1\leq i\leq d$.
			 
We first choose $S=R\ltimes M_i$. We note by Corollary \ref{C:2} that  $S$ is always sequentially Cohen-Macaulay for any $i$. However, it follows by Corollary \ref{C:1} that if $i=d$ then $S$ is not Cohen-Macaulay for any integer $d\geq 1$; if $i=d-1$ then $S$ is not generalized Cohen-Macaulay for any integer $d\geq 2$; if $i=d-2$ then $S$ is not $1$-Cohen-Macaulay for any integer $d\geq 3.$  
			
Next, we choose $S=R\ltimes M'_i$. Then $0\subset S$ is the dimension filtration of $S$ and $0\subset M'_i$ is the dimension filtration of $M'_i$. Note that if $i=d$ then $M'_i$ is not Cohen-Macaulay for any integer $d\geq 2$, hence $M'_i$ is not sequentially Cohen-Macaulay, and hence $S$ is not sequentially Cohen-Macaulay by Corollary \ref{C:2}. Similarly, by Corollary \ref{C:2}, if $i=d-1$ then $M'_i$ is not generalized Cohen-Macaulay for any integer $d\geq 3$, hence $S$ is not sequentially generalized Cohen-Macaulay; if $i=d-2$ then $M'_i$ is not $1$-Cohen-Macaulay for any integer $d\geq 4,$ hence $S$ is not sequentially $1$-Cohen-Macaulay.
			
(b) Let $K$ be an infinite field. Let $V$ be the algebraic variety in $\mathbb{A}^n$ defined by $$x_1=s^4;~x_2=s^3t;~x_3=st^3;~x_4=t^4;~x_5=s^2t^2v;~x_6=v.$$ Then using the proof of \cite[Example 5.4(i)]{NM}, it follows that the Macaulayfication of $V$ is the algebraic variety $W$ defined by $x_1=s^4;~x_2=s^3t;~x_3=st^3;~x_4=t^4;~x_5=s^2t^2;~x_6=v,$  and we have the following exact sequence $$0\to K[V]\to K[W]\to L\to 0,$$ where $K[V],~ K[W]$ are respectively the coordinate rings of $V,~ W$ and $L$ is a non-zero module of dimension $0.$   Let $R_1=K[[s^4, s^3t, st^3, t^4, s^2t^2v, v]]$ and $S_1=K[[s^4, s^3t, st^3, t^4, s^2t^2, v]]$. Then $R_1$ is a local domain with the unique maximal ideal  $\frak m_1=(s^4, s^3t, st^3, t^4, s^2t^2v, v)$ and $\dim(R_1)=3$. Also,  $S_1$ is a local domain, $\frak n_1=(s^4, s^3t, st^3, t^4, s^2t^2, v)$  is the unique maximal ideal of $S_1$ and $\dim(S_1)=3$. Note that  $R_1\hookrightarrow S_1$  is a module-finite extension. It follows by  the above exact sequence that  $S_1$ is Cohen-Macaulay, $H^1_{\frak m_1}(R_1)\neq 0$, $\dim_{R_1}(H^1_{\frak m_1}(R_1))=0$  and $H^i_{\frak m_1}(R_1)=0$ for $i=0$ and $i=2.$ Therefore, $R_1$  is generalized Cohen-Macaulay and $R_1$  is not Cohen-Macaulay.
			
Let $d\geq 3$ be an integer. Let $R=R_1[[w_1, \ldots , w_{d-3}]]$ and $S=S_1[[w_1, \ldots , w_{d-3}]]$ be the formal power series rings of $d-3$ variables over $R_1$ and $S_1$, respectively.  Then $R$ and $S$ are local domains, $\dim(R)=\dim (S)=d$ and  $\frak m=(\frak m_1,w_1, \ldots , w_{d-3})$ is the maximal ideal of $R$, and  $\frak n=(\frak n_1,w_1, \ldots , w_{d-3})$ is the unique maximal ideal of $S$. Note that  $R\hookrightarrow S$ is a module-finite extension.    Since $S_1$ is Cohen-Macaulay, it follows that $S$ is Cohen-Macaulay for any $d\geq 3.$ We note that $$\dim_R(H^{d-2}_{\frak m}(R))=d-3+\dim_{R_1}(H^1_{\frak m_1}(R_1))=d-3.$$ Note that $0\subset R$ is the dimension filtration of $R$. Therefore, $R$ is neither Cohen-Macaulay nor sequentially Cohen-Macaulay for all $d\geq 3$; $R$ is neither generalized Cohen-Macaulay nor sequentially generalized Cohen-Macaulay for all $d\geq 4$; and $R$ is neither $1$-Cohen-Macaulay nor sequentially $1$-Cohen-Macaulay for all $d\geq 5$.    	
\end{proof}		
\end{example}

\end{document}